\renewcommand{\phi}{\varphi}
\newcommand{\Z}{\mathbb{Z}}
\newcommand{\R}{\mathbb{R}}
\newcommand{\Nc}{\mathcal{N}}
\newcommand{\argmin}{\operatorname{\arg}\min}
\newtheorem{theorem}{Theorem}
\theoremstyle{remark}
\newaliascnt{conj}{theorem}
\newaliascnt{cor}{theorem}
\newaliascnt{lemma}{theorem}
\newaliascnt{prop}{theorem}
\newaliascnt{definition}{theorem}
\newaliascnt{example}{theorem}
\newaliascnt{notation}{theorem}
\newaliascnt{experiment}{theorem}
\theoremstyle{theorem}
\theoremstyle{definition}
\newcommand{\prox}{{\rm prox}}
\newcommand{\mean}{{\rm mean}}
\newcommand{\ps}[1]{\left\langle #1 \right\rangle}
\newcommand{\dist}{d}
\newcommand{\Pos}{\ensuremath{\mathrm{Pos}_3}}
\newcommand{\trace}{\mathrm{trace}}
\newcommand{\comment}[1]{}
\title{ 
Mumford-Shah  and Potts Regularization for Manifold-Valued Data
with Applications to DTI and Q-Ball Imaging 
 }
 \author{Andreas Weinmann, Laurent Demaret, Martin Storath\thanks{Andreas Weinmann and Laurent Demaret  are both with the Helmholtz Zentrum München, Germany. Martin Storath is with the Biomedical Imaging Group, École Polytechnique Fédérale de Lausanne, Switzerland.}
 \thanks{
 This work was supported by the German Federal Ministry for Education and Research under SysTec Grant 0315508.
The first author acknowledges support by the Helmholtz Association within the young investigator group \mbox{VH-NG-526}.
The third author was supported by the
European Research Council (ERC) under the European Union's Seventh Framework Programme (FP7/2007-2013) / ERC grant agreement no.~267439. 
 }
 }
\date{\today}
\begin{document}

%% parameters
\newlength\figureheight
\newlength\figurewidth
\setlength\figureheight{0.15\textwidth}

%% Title
\maketitle

\begin{abstract}
Mumford-Shah and Potts functionals are powerful 
variational models for regularization which are widely used in signal and image processing;
typical applications are edge-preserving denoising and segmentation.
Being both non-smooth and non-convex, they are computationally challenging
even for scalar data.
For manifold-valued data, the problem becomes even more involved since typical features 
of vector spaces are not available.
In this paper, we propose algorithms for Mumford-Shah and for Potts regularization of manifold-valued signals and images.
For the univariate problems, we derive solvers
based on dynamic programming combined  with (convex) optimization techniques for manifold-valued data.
For the class of Cartan-Hadamard manifolds (which includes the data space in diffusion tensor imaging), 
we show
that our algorithms compute
global minimizers for any starting point.
For the multivariate Mumford-Shah and Potts problems (for image regularization) 
we propose a splitting into suitable subproblems which we can solve exactly
using the techniques developed for the corresponding univariate problems.
Our method does not require any a priori restrictions on the edge set 
and we do not have to discretize the data space.
We apply our method to diffusion tensor imaging (DTI) as well as Q-ball imaging.
Using the DTI model, we obtain a segmentation of the corpus callosum.		
\end{abstract}

\section{Introduction}
% !TEX root = pottsManifoldOC.tex

In their seminal works 
\cite{mumford1985boundary,mumford1989optimal}
Mumford and Shah introduced a  powerful variational approach 
for image regularization.
It consists of the minimization of an energy functional given by 
\begin{equation} \label{eq:mumfordShah}
 \min_{u, C}  \gamma |C| +  \frac{\alpha}{q}  
   \int_{\Omega\setminus C} |D u(x)|^q  dx + \frac{1}{p}\int_{\Omega} d(u(x),f(x))^p dx.
\end{equation}
Here, $f$ represents the data and $u$ is the target variable to optimize for.
In the scalar case, $u$ and $f$ are real-valued functions on a domain $\Omega \subset \R^2,$
$d$ is the Euclidean metric, and $D u$ denotes the gradient (in the weak sense).
In contrast to Tikhonov-type priors, 
 the Mumford-Shah prior penalizes the variation 
 only on the complement of a discontinuity set $C.$ 
Furthermore, the \enquote{length} $|C|$ (i.e., the outer one-dimensional Hausdorff measure) 
 of this discontinuity set is penalized.
 The parameters $\gamma >0$ and $\alpha>0$ control the balance between the 
penalties. 
Basically, the resulting regularization is a smooth approximation to the image $f$
which, at the same time, allows for sharp variations (\enquote{edges}) at the discontinuity set.
The piecewise constant variant of \eqref{eq:mumfordShah} --
often called Potts functional -- corresponds to the degenerate case $\alpha = \infty$
which amounts to removing the second term in \eqref{eq:mumfordShah}.
Typical applications of these functionals
are edge-preserving smoothing and image segmentation.
For further information considering these problems from various perspectives (calculus of variation, stochastics, inverse problems) we exemplarily refer the reader to  
\cite{potts1952some, blake1987visual, geman1984stochastic, ambrosio1990approximation,chambolle1995image, wittich2008complexity,boysen2009consistencies,fornasier2010iterative,fornasier2013existence,jiang2014regularizing} 
and the references therein.  
These references also deal with theoretical questions such as, e.g., the existence of minimizers.
 Mumford-Shah and Potts problems are computationally challenging
 since one has to deal with non-smooth and non-convex functionals. 
Even for scalar data, both problems are NP-hard in dimensions higher than one \cite{veksler1999efficient, boykov2001fast, alexeev2010complexity}.
This makes finding a (global) minimizer infeasible.
However, due to its importance  in image processing,
many  approximative strategies have been proposed  
for scalar- and vector valued data.
Among these are graduated non-convexity \cite{blake1987visual}, approximation by elliptic functionals \cite{ambrosio1990approximation}, 
graph cuts \cite{boykov2001fast}, active contours \cite{tsai2001curve}, 
convex relaxations \cite{pock2009algorithm}, and iterative thresholding approaches \cite{fornasier2010iterative}.

In recent years, regularization of manifold-valued data
has gained a lot of interest. 
For example, sphere-valued data have been considered for SAR imaging \cite{massonnet1998radar}
and non-flat models for color image processing \cite{chan2001total,vese2002numerical,kimmel2002orientation, lai2014splitting}.
Further examples are  $SO(3)$ data expressing vehicle headings, aircraft orientations or  
camera positions \cite{rahman2005multiscale}, and motion group-valued data \cite{rosman2012group}.  
Related work dealing with the processing of manifold-valued data
are 
wavelet-type multiscale transforms 
\cite{rahman2005multiscale,grohs2009interpolatory, weinmann2012interpolatory} and manifold-valued partial differential equations  
\cite{tschumperle2001diffusion, chefd2004regularizing, grohs2013optimal};
statistics on Riemannian manifolds are the topic of \cite{fletcher2012,fletcher2007riemannian,fletcher2004principal,oller1995intrinsic,bhattacharya2003large,bhattacharya2005large,pennec2006intrinsic}. 
In medical imaging, a prominent example with manifold-valued data is diffusion tensor imaging (DTI).
DTI allows to quantify the diffusional characteristics of a specimen non-invasively \cite{basser1994mr, johansen2009diffusion}; see also the overview in \cite{assaf2008diffusion}.
DTI is helpful in the context of neurodegenerative pathologies such as schizophrenia \cite{foong2000neuropathological, kubicki2007review}, autism 
\cite{alexander2007diffusion} or Huntington's disease \cite{rosas2010altered}.
In DTI, the data can be viewed as living in the Riemannian manifold of positive (definite) matrices; 
see, e.g., \cite{pennec2006riemannian}. 
The underlying distance corresponds to the Fisher-Rao metric \cite{radhakrishna1945information}
which is statistically motivated since the positive matrices (called diffusion tensors) represent covariance matrices.
These tensors model the diffusivity of water molecules.
Oriented diffusivity along fiber structures is reflected by the anisotropy of the corresponding tensors;
typically, there is one large eigenvalue and the corresponding eigenvector yields the orientation of the fiber.  
In DTI, potential problems arise in areas where two or more fiber bundles are crossing because the tensors are not designed for the representation of multiple directions. In order to overcome this, the Q-ball imaging (QBI) approach \cite{tuch2004q,descoteaux2007regularized, hess2006q} uses higher angular information to allow for multiple directional peaks at each voxel; it has been applied to diffusion tractography \cite{behrens2007probabilistic}.
The Q-ball imaging data can be modeled by a probability density on the $3D$-unit sphere called  orientation distribution function (ODF). The corresponding space of ODFs can be endowed with a Riemannian manifold structure \cite{goh2009nonparametric}.

In the context of DTI, Wang and Vemuri consider 
a Chan-Vese model for manifold-valued data (which is a variant of the Potts model for the case of two segments)
and a piecewise smooth analogue \cite{wang2005dti, wang2004affine}.
Their method is based on a level-set active-contour approach
which iteratively evolves the jump set
followed by an update of the mean values (or a smoothing step for the piecewise smooth analogue) 
on each of the two segments.
In order to reduce the computational load in their algorithms
(caused by Riemannian mean computations for a very large amount of points)
the authors resort to non-Riemannian distance measures in \cite{wang2005dti, wang2004affine}. 
Recently, a fast recursive strategy for computing the Riemannian mean has been proposed   
and applied to the piecewise constant Chan-Vese model in \cite{cheng2012efficient}.
Related segmentation methods are $K$-means clustering \cite{wiegell2003automatic},  geometric flows  
\cite{jonasson2005white} or  level set methods \cite{feddern2003level,zhukov2003level}.

In this work, we propose algorithms 
for Mumford-Shah and Potts regularization for Riemannian manifolds 
(which includes DTI with the Fisher-Rao metric) for both signals and images.
For manifold-valued data, 
the distance $d$ in \eqref{eq:mumfordShah} becomes the Riemannian distance
and the differential $D$ can be understood in the sense of 
metric differentials \cite{kirchheim1994rectifiable}.
For univariate Mumford-Shah and Potts problems, we derive solvers based on a combination of dynamic programming techniques developed in \cite{mumford1989optimal,chambolle1995image,winkler2002smoothers,friedrich2008complexity}
and proximal point splitting algorithms for manifold-valued data developed by the authors in 
\cite{weinmann2013total}. 
Our algorithms are applicable 
for manifolds whose Riemannian exponential mapping
and its inverse can be evaluated in reasonable time.
For Cartan-Hadamard manifolds (which includes the manifold in DTI) 
our algorithms compute global minimizers for all input data.
(We note that the univariate  problems are not NP hard.)
These results actually generalize to the more general class of Hadamard spaces.
For Mumford-Shah and Potts problems for manifold-valued images (where the problems become NP-hard), 
we propose a novel splitting approach. 
Starting from a finite difference 
discretization of \eqref{eq:mumfordShah} we use a penalty method to split the problems into computationally tractable subproblems. These subproblems are closely related to univariate Mumford-Shah and Potts problems and can also be solved using the methods we developed for these problems in this paper.
We note that our methods neither require a priori knowledge on the number of segments
nor a discretization of the manifold.
We demonstrate the capabilities of our methods by applying them to two medical imaging modalities:
DTI and Q-ball imaging.
For DTI, we first consider several synthetic examples corrupted by Rician noise and show
our algorithms potential for edge-preserving denoising. 
As specific medical imaging application, we obtain a segmentation of the corpus callosum 
for real human brain data. 
We conclude with experiments for  Q-ball imaging.

\subsection{Organization of the article}

Section \ref{sec:Algos1d} deals with algorithms for the 
univariate Potts and Mumford-Shah problems for manifold-valued data.
We start by presenting a dynamic programming approach 
for the univariate Potts and Mumford-Shah problem in Section~\ref{subsec:DynProgPotts}.
Then we use this approach to derive 
an algorithm for univariate Potts functionals for manifold-valued data in
Section~\ref{subsec:AlgPotts1d} and to derive
an algorithm for the univariate Mumford-Shah problem in Section~\ref{subsec:AlgMumf1d}.	
An analysis of the derived algorithms is given in 
Section~\ref{subsec:Analysis1d}.
In Section~\ref{sec:Algos4Images}, 
we derive algorithms for the 
Potts and Mumford-Shah problems for manifold-valued images.
We first deal with proper discretizations and then
propose a suitable splitting into subproblems that we solve
using similar techniques as in the univariate case.
We apply our algorithm to DTI data in Section~\ref{sec:DTI_manifold} 
and to Q-ball data in Section~\ref{sec:QBall_manifold}.

% !TEX root = pottsManifoldOC.tex

\section{Univariate Mumford-Shah and Potts functionals for manifold-valued data}
\label{sec:Algos1d}

In this section,
we present solvers for Mumford-Shah and Potts problems for univariate manifold-valued data.
These are not only important in their own right; variants of the derived solvers
are also used as a basic building block for the proposed algorithm for the multivariate problems. 

We first deal with some general issues; then, we derive the announced algorithms -- 
first for the univariate Potts problem and then for the univariate Mumford-Shah problem; 
we conclude with an analysis of both algorithms.      

In the univariate case, the discretization of the Mumford-Shah functional \eqref{eq:mumfordShah}
and the Potts functional ($\alpha=\infty$ in \eqref{eq:mumfordShah}) is straightforward.
The (equidistantly sampled) discrete Mumford-Shah functional reads 
\begin{equation} \label{eq:1dMS_mani_jumpFormulation}
	B_{\alpha,\gamma}(x) = \frac1p \sum_{i = 1}^n d(x_i,f_i)^p + \frac{\alpha}{q}  \sum_{i \notin \mathcal{J}(x)} d(x_i,x_{i+1})^q  + \gamma |\mathcal{J}(x)|,
\end{equation}
where $d$ is the distance with respect to the Riemannian metric in the manifold $M,$ $f \in M^n$ is the data, and $\mathcal{J}$ is the jump set of $x.$ 
The jump set is given by $\mathcal{J}(x) = \{i: 1 \leq i < n \mbox{ and } d(x_i,x_{i+1}) > s\}$ 
where the jump height $s$ is related to the parameter $\gamma$ via $\gamma = \alpha s^q/q$.
Using a truncated power function we may rewrite \eqref{eq:1dMS_mani_jumpFormulation} in the Blake-Zisserman type form 
\begin{equation} \label{eq:1dMS_manifold_truncatedFormulation}
B_{\alpha,s}(x) = \frac1p \sum_{i = 1}^n d(x_i,f_i)^p + \frac{\alpha}{q}  \sum_{i=1}^{n-1} 
\min(s^q,d(x_i,x_{i+1})^q),
\end{equation}
where $s$ is the argument the power function $t \mapsto t^q$ is truncated at.

The discrete univariate Potts functional for manifold-valued data reads
\begin{equation} \label{eq:1dPotts_mani}
P_{\gamma}(x) = \frac1p \sum_{i = 1}^n d(x_i,f_i)^p + \gamma |\mathcal{J}(x)|,
\end{equation}
where $d$ is the distance in the manifold and $i$ belongs to the jump set of $x$
if $x_i \neq x_{i+1}.$

We first of all show that the problems \eqref{eq:1dMS_mani_jumpFormulation} and \eqref{eq:1dPotts_mani} have a minimizer. (We recall that certain variants of the continuous Mumford-Shah and Potts functional do not have a minimizer without additional assumptions;
see, e.g., \cite{fornasier2013existence}.)
\begin{theorem}\label{thm:ExMinim1D}
In a complete Riemannian manifold 
the discrete Mumford-Shah functional \eqref{eq:1dMS_mani_jumpFormulation} 
and the discrete Potts functional \eqref{eq:1dPotts_mani} have a minimizer.
\end{theorem}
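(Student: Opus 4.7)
The plan is a standard direct-method argument carried out on the complete Riemannian product $M^n$: take a minimizing sequence, extract a convergent subsequence by compactness, and then show that the functional does not jump upward in the limit. Coercivity comes for free from the data-fidelity term. Since the data $f$ itself is a feasible point, any minimizing sequence $(x^{(k)})$ may be assumed to satisfy $B_{\alpha,\gamma}(x^{(k)}) \leq B_{\alpha,\gamma}(f)$ (respectively $P_{\gamma}(x^{(k)}) \leq P_{\gamma}(f)$). Either bound alone forces $d(x_i^{(k)}, f_i) \leq r := (p\,B_{\alpha,\gamma}(f))^{1/p}$ for each index $i$, so the coordinates of the minimizing sequence lie in the closed Riemannian balls $\overline{B}_r(f_i)$. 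By the Hopf--Rinow theorem, closed bounded sets in a complete (connected) Riemannian manifold are compact, so, up to a subsequence, $x^{(k)} \to x^{*}$ coordinatewise in $M^n$.

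The remaining step is lower semi-continuity of the respective functional at $x^{*}$. For the Mumford--Shah functional I would work with the equivalent truncated formulation $B_{\alpha,s}$ in \eqref{eq:1dMS_manifold_truncatedFormulation}: using $\gamma = \alpha s^q/q$ one checks index-by-index that the jump-set contribution $\frac{\alpha}{q} d(x_i,x_{i+1})^q \mathbf{1}_{i\notin\mathcal{J}(x)} + \gamma \mathbf{1}_{i\in \mathcal{J}(x)}$ equals $\frac{\alpha}{q}\min(s^q, d(x_i,x_{i+1})^q)$, so the two formulations agree. Since the Riemannian distance is continuous and $t \mapsto \min(s^q,t^q)$ is continuous, $B_{\alpha,s}$ is a continuous function on $M^n$, and hence $B_{\alpha,\gamma}(x^{*}) = \lim_k B_{\alpha,\gamma}(x^{(k)}) = \inf B_{\alpha,\gamma}$.

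For the Potts functional continuity fails, but lower semi-continuity of $|\mathcal{J}(\cdot)|$ still holds. Whenever $i \in \mathcal{J}(x^{*})$, that is, $x_i^{*} \neq x_{i+1}^{*}$, continuity of $d$ gives $d(x_i^{(k)}, x_{i+1}^{(k)}) \to d(x_i^{*}, x_{i+1}^{*}) > 0$, so $x_i^{(k)} \neq x_{i+1}^{(k)}$ for all sufficiently large $k$, and hence $i \in \mathcal{J}(x^{(k)})$ eventually. Thus $\mathcal{J}(x^{*}) \subseteq \mathcal{J}(x^{(k)})$ for $k$ large, yielding $|\mathcal{J}(x^{*})| \leq \liminf_k |\mathcal{J}(x^{(k)})|$. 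Combined with continuity of the data-fidelity term, this gives $P_{\gamma}(x^{*}) \leq \liminf_k P_{\gamma}(x^{(k)}) = \inf P_{\gamma}$, so $x^{*}$ is a minimizer.

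I do not foresee a serious obstacle. The only nontrivial piece is the identification of the jump-set and truncated formulations of the Mumford--Shah functional, which promotes the awkward set-valued object $\mathcal{J}$ into an honest continuous summand; once this is noted, the rest is a routine application of Hopf--Rinow plus the elementary lower semi-continuity of the jump count.
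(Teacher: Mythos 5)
Your argument is correct and is essentially the argument the paper uses: the paper deduces Theorem~\ref{thm:ExMinim1D} from Theorem~\ref{thm:ExMinimizers2D}, whose proof is exactly this direct method --- coercivity of the data term, compactness of closed bounded sets via completeness, continuity of the Mumford--Shah functional through the truncated formulation \eqref{eq:1dMS_manifold_truncatedFormulation}, and lower semicontinuity of the jump count for Potts via the same ``eventually still a jump'' observation. The only difference is that you run the direct method explicitly on a minimizing sequence in $M^n$ rather than invoking the two-dimensional statement, which changes nothing of substance.
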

The proof is given in Appendix~\ref{secAppendix:ExMinimizers}.
We note that the data spaces in applications 
are typically complete Riemannian manifolds.

\subsection{The basic dynamic program for univariate Mumford-Shah and Potts problems}
\label{subsec:DynProgPotts}

In order to find a minimizer of the Mumford-Shah problem  \eqref{eq:1dMS_mani_jumpFormulation}
and the Potts problem  \eqref{eq:1dPotts_mani}, we use a general dynamic programming principle which was considered for the corresponding scalar and vectorial problems  in various contexts; 
see, e.g., \cite{mumford1989optimal,chambolle1995image,winkler2002smoothers,friedrich2008complexity,weinmann2014l1potts,
storath2014jump}. 
We briefly recall the basic idea starting with the Mumford-Shah problem.
It is convenient to use the notation
\[
	x_{l:r} = (x_l,..., x_r).
\]
Assume that we have already computed minimizers $x^l$ of 
the functional $B_{\alpha,\gamma}$ associated with the partial data 
$f_{1:l} = (f_1,..., f_l)$ for each $l = 1,..., r-1$ and some $r \leq n.$ 
Then we compute $x^r$ associated to data $f_{1:r}$
as follows.
With each $x^{l-1}$ of length $l-1,$ 
we associate a candidate of the form $x^{l,r}=(x^{l-1},h^{l, r}) \in M^r$
which is the concatenation of $x^{l-1}$	
with a vector $h^{l, r}$ of length $r - l +1.$ 
This vector $h^{l, r}$ is a minimizer of the 
 problem
\begin{equation}\label{eq:epsilonGeneral}
\epsilon_{l,r} = \min_{h \in M^{r- l + 1}} \sum_{i=l}^{r-1} \frac\alpha p d^q(h_i, h_{i+1}) + \frac1p \sum_{i=l}^{r}
d^p(h_i, f_i),
\end{equation}
and $\epsilon_{l,r}$ is the error of a best approximation on the (discrete) interval $(l,...,r).$
Then we calculate the
quantity 
\begin{align}\label{eq:potts_value_candidate}
\min_{l=1,...,r} \{B_{\alpha,\gamma}(x^{l-1}) +  \gamma + \epsilon_{l,r} \},
\end{align}
which we will see to coincide with  
the minimal functional value  of $B_{\alpha,\gamma}$ for data $f_{1:r}$
(cf.~Theorem~\ref{thm:AlgProducesMinimizers} and Theorem~\ref{thm:AlgProducesMinimizersAdmissPartitions}).
Then, we set $x^{r} = x^{l^*, r},$ 
where $l^*$ is a minimizing argument in \eqref{eq:potts_value_candidate}.
We successively compute $x^r$ for each $r = 1,..., n$ until we end up with full data $f.$
Actually, only the $l^*$ and the $\epsilon_{l,r}$ and not the 
vectors $x^{r}$ have to be computed in this selection process; in a postprocessing step, 
the solution can be reconstructed from this information; see Algorithm~\ref{alg:manifold_1D} and  \cite{friedrich2008complexity} for further details.
With these improvements, the dynamic programming skeleton
(without the cost for computing the approximation errors $\epsilon_{l,r}$) 
has quadratic cost with respect to time and linear cost with respect to space.	
In practice, the computation can be accelerated significantly
by pruning the search space \cite{killick2012optimal,storath2014fast}.

In order to adapt the dynamic program for
 the Potts problem  \eqref{eq:1dPotts_mani}
 the only modification required is that 
 the approximation errors on the intervals $\epsilon_{l,r}$ read
\begin{equation}\label{eq:epsilonPotts}
	\epsilon_{l,r} = \min_{h \in M} \frac1p \sum_{i=l}^{r}	d^p(h, f_i),
\end{equation}
and the candidates are of the form
$x^{l,r} = (x^{l-1}, h^{l, r}),$
where $h^{l, r} \in M^{r-l+1}$ is constant and componentwise equals a minimizer $h^\ast$ of $\eqref{eq:epsilonPotts}$ on the interval $l,\ldots, r.$
We next deal with the computation of these minimizers.

\subsection{An algorithm for univariate Potts functionals for mani\-fold-valued data}
\label{subsec:AlgPotts1d}

In order to make the dynamic program from Section \eqref{subsec:DynProgPotts} work for 
the Potts problem for manifold-valued data, we see from Section \eqref{subsec:DynProgPotts}
that we have to compute the approximation errors $\epsilon_{l,r}$ given in \eqref{eq:epsilonPotts}
in the Riemannian manifold $M.$ This means we are faced with the problem 
of computing a minimizer for the manifold-valued data 
$f_{l:r}=(f_l,\ldots,f_{r})$ and then to calculate the corresponding approximation error.

We first consider the case $p=2$ which amounts to the ``mean-variance'' situation.
Since our data live in a Riemannian manifold, the usual vector space operations to define the arithmetic mean are not available. 
However, it is well known 
(cf. \cite{karcher1977riemannian,kendall1990probability, pennec2006riemannian, fletcher2007riemannian})
that a minimizer
\begin{equation} \label{eq:DefMean}
z^\ast \in \argmin_{z \in M} \sum_{i=1}^N d(z,z_i)^2
\end{equation}
is the appropriate definition of a mean $z^\ast \in \mean(z_1,\ldots,z_N)$  of the $N$ elements $z_i$ on the manifold $M$. 
A mean is in general not uniquely defined since the minimization problem has no unique solution in general.
If the $z_i$  are contained in a sufficiently small ball, 
however, the solution is unique. We then replace the ``$\in$'' symbol by an ``$=$'' symbol
and call $z^*$ the mean. 
The actual size of the ball where minimizers are unique depends on the sectional curvature of the manifold $M;$  
for details and for further information we refer to \cite{kendall1990probability, karcher1977riemannian}.  

In contrast to the Euclidean case there is no closed form expression of the intrinsic mean defined by \eqref{eq:DefMean} in Riemannian manifolds.
A widespread method for computing the intrinsic mean is the gradient descent approach
(already mentioned in \cite{karcher1977riemannian})
given by
\begin{equation} \label{eq:GradientDescentIntMean}
z^{(k+1)} =  \exp_{z^{(k)}} \sum_{i=1}^N \tfrac{1}{N} \exp^{-1}_{z^{(k)}}z_i.
\end{equation}
(Recall that the points $z_1,\ldots,z_N$ are the points for which the intrinsic mean is computed.)
Information on convergence related and other issues can, e.g., be found in the papers \cite{fletcher2007riemannian,afsari2013convergence} and the references therin.
Newton's method was also applied to this problem in the literature; see, e.g., \cite{ferreira2013newton}. 
It is reported in the literature and also confirmed by the authors' experience that the gradient descent converges rather fast; 
in most cases, $5$-$10$ iterations are enough.
This might explain why this relatively simple method is widely used.

For general $p\neq 1,$ the gradient descent approach works as well.
The case $p=1$ amounts to considering the intrinsic median and the intrinsic absolute deviation.
In this case, the gradient descent \eqref{eq:GradientDescentIntMean} is replaced by a subgradient descent
which in the differentiable part amounts to rescaling the tangent vector given on the right-hand side of \eqref{eq:GradientDescentIntMean} 
to length $1$ and considering variable step sizes which are square-integrable but not integrable; 
see, e.g., \cite{arnaudon2013approximating}.

A speedup using the structure of the dynamic program is obtained by 
initializing with previous output. More precisely, when starting the iteration of the mean for data 
$f_{l+1:r},$ we can use the already computed mean for the data $f_{l:r}$
as an initial guess. We notice that this guess typically becomes even better 
the more data items we have to compute the mean for, i.e., the bigger $r-l$ is. This is important since this case is the 
computational more expensive part and a good initial guess reduces the number of iterations needed. 
 
 A possible way to reduce the computation time further is to approximate the mean by a certain iterated two-point averaging 
construction (known as geodesic analogues in the subdivision context) 
as explained in \cite{wallner2005convergence}. 
Alternatively, one could use a ``$\log-\exp$'' construction (also known from subdivision; see \cite{rahman2005multiscale})
which amounts to stopping the iteration \eqref{eq:GradientDescentIntMean} after one step.
 
The proposed algorithm for univariate Potts functionals for manifold-valued data is summarized in Algorithm~\ref{alg:manifold_1D}.

 \begin{figure}
 	\def\subfigwidth{1\columnwidth}
 	\def\figurewidth{1.02\textwidth}
 	\def\hs{\hspace{-0.02\textwidth}}
 	\def\vs{\vspace{0.05\textwidth}}
 	\centering
 	\hs
 	\begin{subfigure}[t]{\subfigwidth}
 		\centering
 		\includegraphics[height=\figurewidth, angle=90]{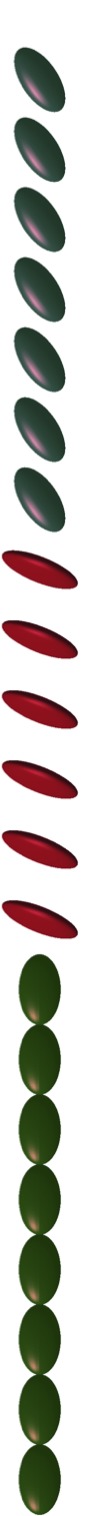}
 		\caption{}
 	\end{subfigure}\\
 	\vs
 	\hs 
 	\begin{subfigure}[t]{\subfigwidth}
 		\centering
 		\includegraphics[height=\figurewidth, angle=90]{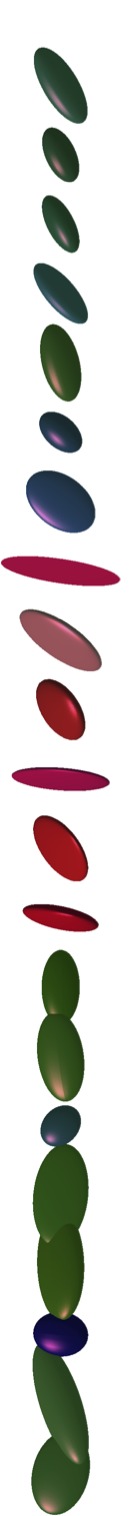}
 		\caption{}
 	\end{subfigure}\\
 	\vs
 	\hs
 	\begin{subfigure}[t]{\subfigwidth}
 		\centering
 		\includegraphics[height=\figurewidth, angle=90]{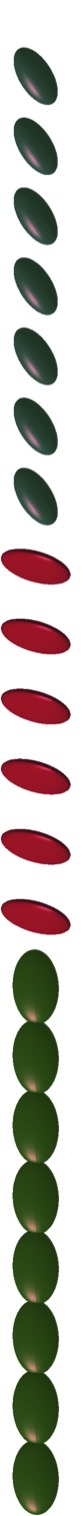}
 		\caption{}
 	\end{subfigure}
  	\caption{(a) Synthetic piecewise constant signal;
 		(b) noisy data (Rician noise with $\kappa = 85$); 
 		(c) Potts regularization ($p,q = 1$) using Algorithm~\ref{alg:manifold_1D} 
 		with parameter $\gamma = 84.5$. The signal is reconstructed almost perfectly;
 		the exact jump locations are obtained. 
 	}
 	\label{fig:ex2_1D}  
 \end{figure}

\subsection{An algorithm for univariate Mumford-Shah functio\-nals for manifold-valued data}
\label{subsec:AlgMumf1d}

In order to make the dynamic program from Section \ref{subsec:DynProgPotts} work for 
the Mumford-Shah problem with manifold-valued data, we have to compute the approximation errors $\epsilon_{l,r}$
in \eqref{eq:epsilonGeneral}. To this end, we compute minimizers of the problem
\begin{equation} \label{eq:minimization_problem}
V_\alpha(x;f) = \frac 1 p \sum_{i} d^p(x_{i},f_{i}) + \alpha \frac 1 q \sum_{i} d^q(x_{i},x_{i+1}). 
\end{equation}
Here $x$ is the target variable and $f$ is the data.
These are $L^p$-$V^q$ type problems: the data term is a manifold $\ell^p$ distance and the second term is a 
$q$th variation; in particular, $q=1$ corresponds to manifold-valued total variation. Solvers
for these problems have been developed in the authors' paper \cite{weinmann2013total}. 
We briefly recall the approach concentrating on the univariate case; for details
we refer to \cite{weinmann2013total}. 
We decompose the functional~\eqref{eq:minimization_problem}  into the sum $V_\alpha = F + \alpha \sum_i G_i,$ 
where we let
$G_i(x)$ $= \frac1q d^q(x_i,x_{i+1})$ 
and  $F(x) =  \frac1p \sum_i d^p(x_i,f_i).$
For each of these summands, we can explicitly compute their proximal mappings defined by
\begin{equation}\label{eq:DefProxy}
\prox_{\lambda G_{i}} x = \argmin_y  \left( \lambda G_{i}(y) +  \frac12  d^2(x,y) \right).
\end{equation}
They are given in terms of points on certain geodesics. In detail, we get
\begin{equation}\label{eq:ProxOfGi}
\begin{split}
(\prox_{\lambda G_{i}} x)_{i} &=  [x_{i},x_{i+1}]_t,  \\
(\prox_{\lambda G_{i}} x)_{i+1} &=  [x_{i+1},x_i]_t.
\end{split}
\end{equation}
where $[x,y]_t$ denotes the point reached after time $t$ on the unit speed geodesic which is starting in $x$ and going to $y$.
For the practically relevant cases $q=1,2$ the parameter $t$ has an explicit representation: 
for $q=1,$ we have $t = \lambda,$ if  $\lambda < \tfrac{1}{2} d(x_i,x_{i+1}),$ and $d(x_i,x_{i+1})/2$ else;
for $q=2$ we get $t =  \frac{\lambda}{1+ 2 \lambda} d(x_{i},x_{i+1}).$
Similarly, the proximal mapping of $F$ is given by  
\begin{align} \label{eq:ProxOfF}
(\prox_{\lambda F})_{i}(x) =  [x_{i},f_{i}]_s.    
\end{align}
For $p=1,$ we have $s = \lambda$ if $\lambda < d(x_{i},f_{i}),$  and
$d(x_{i},f_{i})$ else; for $p=2,$ we obtain that $s= \tfrac{\lambda}{1+\lambda} d(x_{i},f_{i})$.  
We notice that the above proximal operators are uniquely defined if there is precisely one shortest geodesic joining the two points involved. 
Otherwise, one has to resort to set-valued mappings. Uniqueness is given for the class of Cartan-Hadamard manifolds
which includes the data space in DTI considered in Section~\ref{sec:DTI_manifold}. 

Equipped with these proximal mappings we apply a cyclic proximal point algorithm for manifold-valued data \cite{bavcak2013computing}: 
we apply the proximal mappings of $F,\alpha G_r,\ldots, \alpha G_l$ (with parameter $\lambda$) and iterate this procedure.
During the iteration, we decrease the parameter $\lambda_k$ in the $k$th iteration in a way such that
$\sum_k \lambda_k = \infty$ and $\sum_k \lambda_k^2 < \infty.$ 

A speedup using the structure of the dynamic program is obtained by 
initializing with previous output as explained for the Potts problem in Section~\ref{subsec:AlgPotts1d}.  
The proposed algorithm for univariate Mumford-Shah functionals with manifold-valued data is summarized in Algorithm~\ref{alg:manifold_1D}.

\begin{algorithm}[ht]
\SetCommentSty{itshape}
	\Begin{
		\tcp{Find optimal partition}
				$B_0 \leftarrow  -\gamma$\;
		\For{$r\leftarrow 1,...,n$}{
			\For{$l\leftarrow 1,...,r$}{
				\tcp{Mumford-Shah case (Sec.~\ref{subsec:AlgMumf1d}):}
			$\epsilon \leftarrow \min_{h \in M^{r-l+1}} V_\alpha(h;f_{l:r})$ \tcp{use Alg. of Sec. \ref{subsec:AlgMumf1d}} 
				\tcp{Potts case (Sec.~\ref{subsec:AlgPotts1d}):}
			$\epsilon \leftarrow \min_{h \in M} \sum_{i=l}^r d^p(h,f_i)$ \tcp{use Alg. of Sec. \ref{subsec:AlgPotts1d}}
				$b \leftarrow B_{l-1} + \gamma + \epsilon;$\\
				\If{$b<B_r$}
				{
					$B_r \leftarrow b;$\\
					$p_r \leftarrow l-1;$
				}
			}
			}
			\tcp{Reconstruct solution from partition}
			$r\leftarrow n; l \leftarrow p_r$;\\
			\While{ $l>0$ }{
								\tcp{Mumford-Shah case (Sec.~\ref{subsec:AlgMumf1d}):}
				$h^* \leftarrow \argmin_{h \in M^{r-l+1}} V_\alpha(h;f_{l+1:r})$ \tcp{use Alg. of Sec. \ref{subsec:AlgMumf1d}}
				\tcp{Potts case (Sec.~\ref{subsec:AlgPotts1d}):}
				$h' \leftarrow \argmin_{h \in M} \sum_{i=l+1}^r d^p(h,f_{i})$
				\tcp{use Alg. of Sec. \ref{subsec:AlgPotts1d}}
				$h^* \leftarrow (h',\ldots,h')$\;
				$x^*_{l+1:r} \leftarrow h^*$\;
				$r\leftarrow l; l \leftarrow p_r$;\\
			}
			\Return{$x^*$}
	}
	\caption{Algorithm for the Mumford-Shah problem \eqref{eq:1dMS_mani_jumpFormulation} and the Potts problem \eqref{eq:1dPotts_mani} for univariate manifold-valued data}
	\label{alg:manifold_1D}
\end{algorithm}

\begin{figure}
	\def\subfigwidth{1\columnwidth}
	\def\figurewidth{1.02\textwidth}
	\def\vs{\vspace{0.04\textwidth}}
	\centering
	\begin{subfigure}[t]{\subfigwidth}
		\centering
		\includegraphics[height=\figurewidth, angle=-90]{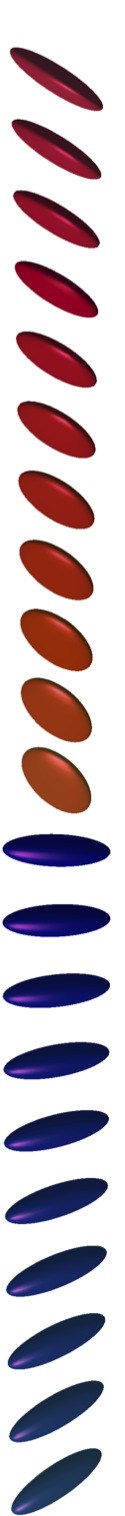}
		\caption{}
	\end{subfigure}\\
	\vs
	\begin{subfigure}[t]{\subfigwidth}
		\centering
		\includegraphics[height=\figurewidth, angle=-90]{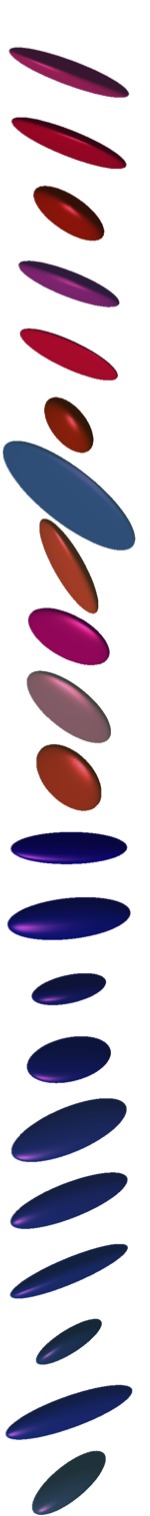}
		\caption{}
	\end{subfigure}\\
	\vs 
	\begin{subfigure}[t]{\subfigwidth}
		\centering
		\includegraphics[height=\figurewidth, angle=-90]{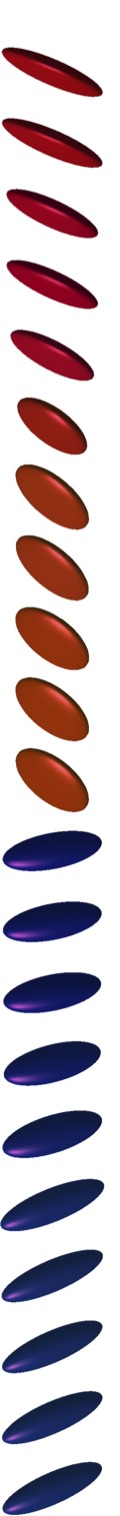}
		\caption{}
	\end{subfigure}
	\caption{(a) Synthetic piecewise smooth signal;
		(b) noisy data (Rician noise with $\kappa = 70$); 
		(c) Mumford-Shah regularization ($p,q = 1$) using Algorithm~\ref{alg:manifold_1D}
		with parameters $\alpha = 1.45$ and $\gamma = 1.5$.
		The noise is removed while preserving the jump.		
	}
	\label{fig:ex3_1D}  
\end{figure}

\subsection{Analysis of the univariate Potts and Mumford-Shah algorithms }
\label{subsec:Analysis1d}

We first obtain that our algorithms yield global minimizers for data in the class of Cartan-Hadamard manifolds 
which includes many symmetric spaces.
Prominent examples are the spaces of positive matrices (which are the data space in diffusion tensor imaging) and the hyperbolic spaces.
These are complete simply-connected Riemannian manifolds of nonpositive sectional curvature.  
For details we refer to \cite{doCarmo1992ri} or to \cite{ballmann1985manifolds}.
In particular, in these manifolds, geodesics always exist and are unique shortest paths.

\begin{theorem}\label{thm:AlgProducesMinimizers}
In a Cartan-Hadamard manifold,
Algorithm~\ref{alg:manifold_1D} 
produces a global minimizer for the univariate Mumford-Shah problem \eqref{eq:1dMS_mani_jumpFormulation}
(and the discrete Potts problem \eqref{eq:1dPotts_mani}, accordingly).   
\end{theorem}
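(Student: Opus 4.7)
The plan is to decompose the correctness argument into two essentially independent parts: (i) verifying the structural correctness of the dynamic program given oracle access to optimal interval solvers; and (ii) verifying that, on a Cartan-Hadamard manifold, the interval solvers used for $\epsilon_{l,r}$ in fact return global minima of the corresponding subproblems. The theorem then follows by combining (i) and (ii).

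For part (i), I would argue as follows. Any candidate $x \in M^n$ induces a partition of $\{1,\ldots,n\}$ into maximal jump-free blocks $I_1 = (1,\ldots,l_1), I_2 = (l_1+1,\ldots,l_2), \ldots, I_{k+1} = (l_k+1,\ldots,n)$, where $\mathcal{J}(x) = \{l_1,\ldots,l_k\}$. Since the penalty $\frac{\alpha}{q} d(x_i,x_{i+1})^q$ is only applied when $i \notin \mathcal{J}(x)$, the Mumford-Shah functional decomposes as
\begin{equation*}
B_{\alpha,\gamma}(x) = \gamma k + \sum_{j=1}^{k+1} V_\alpha\bigl(x\bigr|_{I_j}; f\bigr|_{I_j}\bigr),
\end{equation*}
with $V_\alpha$ as defined in \eqref{eq:minimization_problem}. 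Minimizing over all $x$ with a fixed partition reduces to independent subproblems whose optimal values are exactly $\epsilon_{l_j+1,l_{j+1}}$ from \eqref{eq:epsilonGeneral}. Denote by $B_r^\ast$ the minimal value of $B_{\alpha,\gamma}$ on data $f_{1:r}$. Conditioning on the starting index $l$ of the last block gives the Bellman recursion
\begin{equation*}
B_r^\ast = \min_{1 \le l \le r} \bigl\{ B_{l-1}^\ast + \gamma + \epsilon_{l,r}\bigr\},
\end{equation*}
which, together with the initialization $B_0 = -\gamma$ (cancelling the spurious leading $\gamma$ for the first block), matches the update performed in Algorithm~\ref{alg:manifold_1D}. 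An inductive argument then shows that $B_r = B_r^\ast$ for every $r$, and that the back-tracking via $p_r$ reconstructs an optimal partition and an optimal $x^\ast$. The Potts case is handled identically, with $V_\alpha$ replaced by $\sum_{i=l}^r d^p(h,f_i)$.

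For part (ii), I would invoke the characteristic geometric property of Cartan-Hadamard manifolds: the distance $d(\cdot,y)$ is geodesically convex for every $y$, so $d(\cdot,y)^p$ is convex for $p \ge 1$, and similarly $d(x,y)^q$ is jointly convex as a function of $(x,y)$. Consequently, both the Potts subproblem $h \mapsto \sum d^p(h,f_i)$ and the Mumford-Shah subproblem $V_\alpha(\cdot;f_{l:r})$ are geodesically convex on a Hadamard manifold, and moreover coercive thanks to the data term. Uniqueness and existence of minimizers follow (so the intrinsic mean is the genuine global minimizer, and the proximal mappings \eqref{eq:ProxOfGi}, \eqref{eq:ProxOfF} are single-valued). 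For the Mumford-Shah subproblem I would then cite the convergence result for the cyclic proximal point algorithm in Hadamard spaces due to Ba\v{c}\'ak \cite{bavcak2013computing}: under the step-size rule $\sum_k \lambda_k = \infty$, $\sum_k \lambda_k^2 < \infty$, the iterates converge to a minimizer of the sum of convex functions, which here is the global minimum of $V_\alpha$. For the Potts subproblem, the analogous statement for the gradient (or subgradient) descent \eqref{eq:GradientDescentIntMean} toward the Fr\'echet mean in Hadamard manifolds yields the global minimizer of \eqref{eq:epsilonPotts}; see, e.g., \cite{afsari2013convergence}.

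The main obstacle, as I see it, is the interface between parts (i) and (ii): the inner solvers are iterative and only return $\epsilon_{l,r}$ as a limit, whereas the dynamic programming argument is phrased with exact values. To be fully rigorous one should either (a) argue that the limit computation is what we mean by ``the output of the algorithm'', so that $B_r = B_r^\ast$ in the limit and the produced partition becomes optimal; or (b) quantify stability of the recursion in the approximation error, showing that once the inner tolerances are small enough, the discrete $\argmin$ over $l$ in \eqref{eq:potts_value_candidate} selects a globally optimal partition (using that the set of candidate partitions is finite and, for generic data, the gap between the optimum and the next-best partition is strictly positive; for non-generic data one simply obtains \emph{some} global minimizer). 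Apart from this technical point, a minor bookkeeping step is verifying that the cyclic proximal point algorithm really applies to the nonsmooth case $p=1$ or $q=1$, which is covered by the subgradient variant and the explicit proximal formulas given after \eqref{eq:ProxOfGi}--\eqref{eq:ProxOfF}.
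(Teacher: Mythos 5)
Your proposal is correct and follows essentially the same route as the paper: the paper also splits the argument into the standard dynamic-programming induction (conditioning on the rightmost jump point, which is your Bellman recursion in slightly different clothing) plus the fact that on a Cartan--Hadamard manifold the inner solvers return \emph{global} minimizers of the interval subproblems (the paper cites Theorem~2 of \cite{weinmann2013total} for the cyclic proximal point algorithm and the convergence of the (sub)gradient descent to the Fr\'echet mean/median, where you instead spell out the geodesic convexity and invoke \cite{bavcak2013computing} directly). Your remarks on the limit-versus-exact-value interface and on the nonsmooth cases $p=1$, $q=1$ address points the paper's proof passes over silently, but they do not change the argument.
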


The proof is given in  Appendix \ref{secAppend:UnivarMinimAlgo}.

We notice that this result generalizes to the more general class of (locally compact) Hadamard spaces. 
These are certain metric spaces generalizing the concept of Cartan-Hadamard manifolds; see, e.g., \cite{sturm2003he}.
Examples of Hadamard spaces which are not Cartan-Hadamard manifolds are the metric trees in \cite{sturm2003he}.
The validity of Theorem \ref{thm:AlgProducesMinimizers} for (locally compact) Hadamard spaces may be seen by inspecting
the proof noticing that all steps rely only on features of these spaces.

For analysis of general complete Riemannian manifolds, 
we first notice that, in this case we have to deal with questions of well-definedness.
We consider the Potts functional and data $f_1,\ldots,f_n.$ 
For each (discrete) subinterval $[l,r],$ a corresponding mean $h^{l,r}$ is defined as a minimizer of \eqref{eq:DefMean}
for data $f_l,\ldots,f_r.$ 
Although such a minimizer exists by the coercivity and continuity of the functional,
it might not be unique. Furthermore,
an algorithm such as gradient descent only computes a local minimizer for general input data. For data not too far apart, however, 
the gradient descent produces a global minimizer of \eqref{eq:DefMean} (since then the corresponding functional is convex).
If data are so far apart that the operations in the manifold are not even well-defined it might be likely that they do not belong to the same segment. Hence, let us consider a constant $C_K$ such that, if points belong to a $C_K$-ball with center in the compact set $K,$
then their mean is uniquely defined and obtained by converging gradient descent.
Assuming that  the data lie in $K,$ we call a partition of $[1,n]$ admissible if for any interval $[l,r]$ in this partition the corresponding data $f_{l:r}$ are centered in a common $C_K$-ball.
We get the following result.

\begin{theorem}\label{thm:AlgProducesMinimizersAdmissPartitions}
	Let $M$ be a complete Riemannian manifold. Then the  univariate Potts problem given in Algorithm \ref{alg:manifold_1D} with $p=2$
	produces a minimizer of the discrete Potts problem \eqref{eq:1dPotts_mani} when restricting the search space  to candidates whose jump sets correspond to admissible partitions.  
\end{theorem}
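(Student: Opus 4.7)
The plan is to mirror the argument of Theorem~\ref{thm:AlgProducesMinimizers} but to restrict the state space of the dynamic program to ``admissible'' candidates, i.e., vectors whose jump partition is admissible in the sense defined above. Write $\Pm_{\mathrm{adm}}$ for the set of admissible partitions of $\{1,\ldots,n\}$ (so for $[l,r]\in\pi\in\Pm_{\mathrm{adm}}$ the data $f_{l:r}$ lie in a common $C_K$-ball) and let $P^{\mathrm{adm}}_{\gamma}(x)$ denote the restriction of the Potts functional to $x$ whose induced partition is in $\Pm_{\mathrm{adm}}$. For each interval $[l,r]$ with $f_{l:r}$ centered in a $C_K$-ball, the intrinsic mean $h^{l,r}$ is uniquely defined by choice of $C_K$, and by the same choice the gradient descent \eqref{eq:GradientDescentIntMean} used in Algorithm~\ref{alg:manifold_1D} converges to it; hence the quantity $\epsilon_{l,r}$ produced inside the algorithm equals $\min_{h\in M}\sum_{i=l}^r d^2(h,f_i)$ on every such interval. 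On intervals that are not admissible, the algorithm may not return the true infimum, but these intervals are excluded from the search space we care about, so they can never be chosen in an admissible optimizer.

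Next, I would establish the Bellman principle on $\Pm_{\mathrm{adm}}$. The key observation is that if $\pi\in\Pm_{\mathrm{adm}}$ is a partition of $\{1,\ldots,r\}$ and $[l,r]$ is its last segment, then the partition of $\{1,\ldots,l-1\}$ inherited from $\pi$ is itself admissible (admissibility of a partition is a per-segment condition, so it is preserved by taking initial subpartitions). Hence
\begin{equation*}
\min_{x:\,\mathcal{J}(x)\text{ adm.}} P_\gamma^{[1,r]}(x)
= \min_{1\le l\le r}\Bigl(\min_{x:\,\mathcal{J}(x)\text{ adm.}} P_\gamma^{[1,l-1]}(x) + \gamma + \epsilon_{l,r}\Bigr),
\end{equation*}
with the convention that the $l=1$ summand contributes $0$ instead of $\gamma$ for the empty history, and with the understanding that the inner minimum ranges only over those $l$ for which $[l,r]$ is admissible (otherwise the combined partition is not admissible and the summand is discarded).

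With these two ingredients in hand, I would prove by induction on $r$ that the value $B_r$ computed by Algorithm~\ref{alg:manifold_1D} equals $\min_{x} P_\gamma^{[1,r]}(x)$ with the minimum ranging over admissible candidates. The base case $B_0=-\gamma$ matches the empty sum convention. For the step, the Bellman identity together with the correctness of $\epsilon_{l,r}$ on admissible segments yields the claim; the $\arg\min$ stored in $p_r$ records a last split-point of some admissible optimizer. The reconstruction phase then builds $x^\ast$ by back-tracking through $p_r$ and placing on each recovered interval $[l+1,r]$ the unique minimizer $h^\ast$ of the segment problem, which is well-defined and globally optimal by admissibility; stacking these pieces produces an admissible candidate achieving the value $B_n$, hence a minimizer over the admissible search space.

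The main obstacle is the tension between \emph{local} convergence of the gradient descent for the intrinsic mean and the \emph{global} character of the dynamic program, which a priori examines every pair $(l,r)$. The argument above resolves this by verifying that the restriction to admissible partitions exactly matches the set of subintervals on which the subroutine is guaranteed to deliver the true segmentwise infimum, so that ``wrong'' outputs on non-admissible intervals are irrelevant to the restricted optimization. A mild technical point that deserves care is the convention for the $l=1$ summand and the choice of $B_0$; this needs to be spelled out so that the inductive Bellman identity and the initialization $B_0=-\gamma$ are mutually consistent, but it is a bookkeeping issue rather than a conceptual one.
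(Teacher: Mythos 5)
Your proposal is correct and follows essentially the same route as the paper: both arguments rest on the exactness of the computed Riemannian mean (hence of $\epsilon_{l,r}$) on admissible segments, the fact that an initial subpartition of an admissible partition is again admissible, and an induction on $r$ through the Bellman recursion \eqref{eq:potts_value_candidate}. You are somewhat more explicit than the paper about restricting the inner minimum to those $l$ for which $[l,r]$ is admissible and about the $B_0=-\gamma$ bookkeeping, but these are refinements of the same argument, not a different one.
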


The proof can be found in Appendix \ref{secAppend:UnivarMinimAlgo}. This result can be easily generalized to the general case $p\geq 1.$

\section{Mumford-Shah and Potts problems for mani\-fold-valued images}
\label{sec:Algos4Images}

	We now consider Mumford-Shah and Potts regularization 
	for manifold-valued images.
	In contrast to the univariate case,
	finding global minimizers is not tractable anymore in general.
	In fact, the Mumford-Shah problem and the Potts problem are known to be NP hard 
	in dimensions higher than one even for scalar data \cite{veksler1999efficient,
	alexeev2010complexity}.
	Therefore, the goal is to derive approximative strategies that 
	perform well in practice.
	
In the following it is convenient 
to use the notation $d^p(x,y)$ for the $p$-distance of two manifold-valued images $x,y \in M^{m\times n},$
i.e.
\[
d^p(x,y) = \sum_{i,j}d^p(x_{ij}, y_{ij}).
\]
We further define the penalty function
\[
\Psi_{a}(x) = \sum_{i,j} \psi(x_{(i,j) + a}, x_{ij})
\]
with respect to some finite difference vector $a \in \Z^2 \setminus \{0\}.$
Here, we instantiate the potential function $\psi$ 
in the  Mumford-Shah case by
\begin{equation}\label{eq:psiMumfi}
\psi(w,z) =  \frac{1}{q}\min(s^q, d(w, z)^q).
\end{equation}
and in the Potts case by
\begin{equation}\label{eq:psiPotts}
\psi(w,z) = 
\begin{cases}
	1, &\text{if }w \neq z,\\
	0, &\text{if }w = z,\\
\end{cases}
\end{equation}
for $w, z \in M.$

In higher dimensions, 
the discretization of the Mumford-Shah and Potts problem
is not as straightforward as in the univariate case.
A simple finite difference discretization
with respect to the coordinate  directions
is known to produce undesired block 
artifacts in the reconstruction 
\cite{chambolle1999finite}.
The results improve significantly 
when including further finite differences 
such as the diagonal directions \cite{chambolle1999finite,storath2014fast, storath2014joint}.
We here use a discretization of the general form
	\begin{equation}\label{eq:msDiscrete}
\min_{x \in M^{m\times n}}  \frac{1}{p} d^p(x, f) + \alpha\sum_{s=1}^R \omega_s \Psi_{a_s}(x),
\end{equation}
where the finite difference vectors $a_s \in \Z^2\setminus\{0\}$ belong to a neighborhood system $\Nc.$
The values $\omega_1,..., \omega_R$ are
 non-negative weights.
 We  focus on the neighborhood system
 \[
 	\Nc = \{  (1,0); (0,1); (1,1); (1,-1) \}
 \]
with the weights $\omega_1 = \omega_2 = \sqrt{2} -1$ and $\omega_3 = \omega_4 = 1- \frac{\sqrt{2}}{2}$
as in \cite{storath2014fast}.
 For further neighborhood systems and weights we refer to \cite{chambolle1999finite,storath2014fast}.
We next show the existence of minimizers of the discrete functional \eqref{eq:msDiscrete}.

\begin{theorem}\label{thm:ExMinimizers2D}
	Let $M$ be a complete Riemannian manifold. Then the discrete Mumford-Shah and Potts problems \eqref{eq:msDiscrete} both have a minimizer.
\end{theorem}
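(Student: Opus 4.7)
The plan is to apply the direct method of the calculus of variations. The key structural facts to exploit are: (i) each summand in the functional \eqref{eq:msDiscrete} is non-negative, so the functional is bounded below by $0$; (ii) the data term $\tfrac{1}{p} d^p(\argdot, f)$ is coercive on $M^{m\times n}$; and (iii) on a complete Riemannian manifold closed and bounded subsets are compact by the Hopf-Rinow theorem, and this property lifts to the product manifold $M^{m\times n}$ equipped with the product (or $\ell^p$) distance.

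First I would pick a minimizing sequence $x^k \in M^{m\times n}$ with $E(x^k) \to \inf E$, where $E$ denotes the functional in \eqref{eq:msDiscrete}. Since the penalty contribution is non-negative, one gets the uniform bound $\tfrac{1}{p} d^p(x^k, f) \leq E(x^k) \leq C$ for some constant $C$ independent of $k$. Componentwise this yields $d(x_{ij}^k, f_{ij}) \leq (pC)^{1/p}$ for every pixel $(i,j)$, so the entire sequence lies in a fixed closed ball around $f$. By Hopf-Rinow applied componentwise, this ball is compact in $M^{m\times n}$, and hence there is a subsequence $x^{k_\ell}$ converging to some $x^\ast \in M^{m\times n}$.

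Next I would establish lower semicontinuity of $E$ at $x^\ast$. The data term is continuous because the Riemannian distance is continuous. In the Mumford-Shah case, the potential $\psi(w,z) = \tfrac{1}{q}\min(s^q, d(w,z)^q)$ is continuous as a minimum of continuous functions, so each $\Psi_{a_s}$ is continuous. In the Potts case, $\psi(w,z) = 1 - \mathbf{1}_{\{w=z\}}$, and the diagonal $\{(w,z) \in M \times M : w = z\}$ is closed, so the indicator of its complement is lower semicontinuous; as a finite sum of lower semicontinuous non-negative functions, $\Psi_{a_s}$ inherits lower semicontinuity. In either case $E$ is lower semicontinuous, hence
\[
E(x^\ast) \leq \liminf_{\ell \to \infty} E(x^{k_\ell}) = \inf_{x \in M^{m\times n}} E(x),
\]
which shows that $x^\ast$ is a minimizer.

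No step of this plan is genuinely hard; the only point demanding a moment of care is the lower semicontinuity in the Potts case, which follows from the closedness of the diagonal. The argument is identical for both functionals, the only difference being continuity versus mere lower semicontinuity of the penalty.
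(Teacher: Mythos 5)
Your proof is correct and follows essentially the same route as the paper: coercivity of the data term plus completeness (Hopf--Rinow) to confine a minimizing sequence to a compact set, continuity of the truncated penalty in the Mumford--Shah case, and lower semicontinuity of the jump penalty in the Potts case. The only cosmetic difference is that you justify the Potts lower semicontinuity via closedness of the diagonal in $M\times M$, whereas the paper argues by contradiction using convergence of the pairwise distances; both arguments are equally valid.
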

The proof is given in Appendix~\ref{secAppendix:ExMinimizers}.

	We next propose a splitting approach for the discrete
	 Mumford-Shah and Potts problems.
	To this end, we rewrite \eqref{eq:msDiscrete}
	as the constrained problem
	\begin{equation}\label{eq:constrained}
	\begin{split}
	&\min_{x_1,..., x_R}~\sum_{s=1}^R  \frac{1}{pR} d^p(x_s, f) + \alpha\omega_s \Psi_{a_s}(x_s)  \\
	&\text{subject to }  x_s = x_{s+1}  \text{ for all }1 \leq s \leq R.
	\end{split}
	\end{equation}
	Here, we use the  convention $x_{R+1} = x_1.$
	(Note that  $x_1,..., x_R$ are ${m\times n}$ images.)
	We use a penalty method (see e.g.~\cite{bertsekas1976multiplier})
	to include the constraints into the target functional and get the problem
	\begin{equation*}
	\min_{x_1,..., x_R}~\sum_{s=1}^R \omega_s pR \alpha \Psi_{a_s}(x_s) +  d^p(x_s, f)
	+ \mu_k d^p(x_s, x_{s+1}).
		\end{equation*}
	We use an increasing coupling sequence $(\mu_k)_k$ which fulfills the summability condition
	 $\sum_k \mu_{k}^{-1/p} < \infty.$
	Optimization with respect to all variables simultaneously is still not tractable,
	but our specific
	 splitting allows us to minimize the functional blockwise,
	 that is, with respect variables $x_1,..., x_R$ separately. 
	Performing the blockwise minimization we get
	the algorithm
	\begin{equation}\label{eq:OurSplitting}
	\left\{
	\begin{split}
		&\begin{split}
	x_1^{k+1} \in \argmin_{x}pR\omega_1 \alpha \Psi_{a_1}(x) &+ d^p(x, f)  
	             + \mu_k d^p(x, x_{R}^k), 
	     \end{split} \\[3ex]
	   &\begin{split}
	x_2^{k+1} \in \argmin_{x}pR\omega_2 \alpha \Psi_{a_2}(x) &+  d^p(x, f) 
		+ \mu_k d^p(x, x_{1}^{k+1}), \\
		\end{split}\\
	&\quad\vdots \\[2ex]
	&\begin{split}
	x_R^{k+1} \in		\argmin_{x}pR\omega_R  \alpha\Psi_{a_R}(x) &+  d^p(x, f) 
	 + \mu_k d^p(x, x_{R-1}^{k+1}).
	 	\end{split}
	\end{split}
	\right.
	\end{equation}
	We notice that each line of \eqref{eq:OurSplitting}
	decomposes into univariate subproblems of Mumford-Shah and Potts 
	type, respectively.
For example, we obtain
	\begin{equation}
		(x_{1})_{:,j} \in  \argmin_{z \in M^{n}}~pR \omega_1 \alpha \Psi(z) +  d^p(z, f_{:,j}) 
	+ \mu_k d^p(z, (x_{R}^{k})_{:,j})
	\end{equation}
	for the direction $a_1 = (1,0).$

The subproblems    are almost identical 
 with the univariate problems of Section \ref{sec:Algos1d}.
 Therefore, we can use the algorithms developed in 
 Section \ref{sec:Algos1d} with the following minor modification.
 For the Potts problem, the approximation errors are now instantiated by
\begin{equation*}
	\epsilon_{l,r} =  \min_{h \in M} \sum_{i=l}^{r}	d^p(h, f_{ij}) + 
	\mu_k d^p(h, (x_R^k)_{ij}),
\end{equation*}
for the subproblems with respect to direction $a_1$
(and analogously for the other directions $a_2,...,a_R$.)
This quantity can be computed 
by the gradient descent explained in Section~\ref{subsec:AlgPotts1d}.
In the Mumford-Shah case,
we have
\begin{equation*}
	\epsilon_{l,r} =  \min_{h \in M^{r- l + 1}} \sum_{i=l}^{r-1} pR \omega_1 \alpha d^q(h_i, h_{i+1})  \\ + \sum_{i=l}^{r}
	d^p(h_i, f_{ij}) 
	+ \sum_{i=l}^{m}
	\mu_k d^p(h_i, (x_R^k)_{ij}).
\end{equation*}
The only difference to
\eqref{eq:epsilonGeneral}
is the  extra \enquote{data term} 
\[
F'(h) = \sum_{i=l}^{r} d^p(h_i, (x_R^k)_{ij}).
\]
Its proximal mapping has the same 
form as the proximal mapping of $F$ 
in Section~\ref{subsec:AlgMumf1d}.
Thus, we only need to complement  
the cyclic proximal point algorithm for the $L^p$-$V^q$ problem
of Section~\ref{subsec:AlgMumf1d} 
by an evaluation of the proximal mapping with respect to $F'.$

We eventually show convergence.
	\begin{theorem} \label{thm:Convergence2D}
		For Cartan-Hadamard manifold-valued images 
		the algorithm \eqref{eq:OurSplitting} for both the Mumford-Shah and the Potts problem converge.
	\end{theorem}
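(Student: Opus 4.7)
The plan is to combine the penalty-method structure of \eqref{eq:OurSplitting} with the CAT(0) geometry of Cartan-Hadamard spaces. A key prerequisite is that each subproblem in \eqref{eq:OurSplitting} is a univariate $L^p$--$V^q$ or Potts problem which, by Theorem~\ref{thm:AlgProducesMinimizers}, admits a global minimizer on any Cartan-Hadamard manifold, so the block updates are well-defined throughout.

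First I would establish a uniform bound on the iterates. Testing the optimality of $x_s^{k+1}$ in the $s$-th subproblem against the constant comparison point $\bar x = f$ and using the fact that both truncated potentials \eqref{eq:psiMumfi} and \eqref{eq:psiPotts} are bounded pointwise gives an inequality of the form
\[
d^p(x_s^{k+1},f) + \mu_k d^p(x_s^{k+1}, x_{s-1}^{k+1}) \leq C + \mu_k d^p(x_{s-1}^{k+1}, f).
\]
Induction on $s$ together with the triangle inequality for the $L^p$-type metric $(x,y)\mapsto d^p(x,y)^{1/p}$ on $M^{m\times n}$ then produces a uniform bound $d^p(x_s^k,f)\leq C'$ for all $s,k$.

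Second, I would show summability of the constraint violation. Testing the $s$-th subproblem against $\bar x = x_{s-1}^{k+1}$ yields
\[
\mu_k d^p(x_s^{k+1}, x_{s-1}^{k+1}) \leq h_s(x_{s-1}^{k+1}) - h_s(x_s^{k+1}),
\]
where $h_s(x) = pR\omega_s\alpha\Psi_{a_s}(x) + d^p(x,f)$. By Step~1 the right-hand side is bounded independently of $k$, so $d^p(x_s^{k+1}, x_{s-1}^{k+1})^{1/p} \leq C_1 \mu_k^{-1/p}$, and the hypothesis $\sum_k \mu_k^{-1/p} < \infty$ yields summability and, a fortiori, vanishing of the constraint violations.

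The principal obstacle will be deriving a Cauchy estimate for the outer iterates. The strategy is to pair the optimality inequality for $x_s^{k+1}$ at coupling $\mu_k$ with the one for $x_s^k$ at coupling $\mu_{k-1}$, each tested against the other, and to apply the Reshetnyak quadrilateral inequality (valid on every CAT(0) space, hence on every Cartan-Hadamard manifold) to control the mixed cross-terms $d^p(x_s^k, x_{s-1}^{k+1})$ and $d^p(x_s^{k+1}, x_{s-1}^k)$. Careful combination of these inequalities together with the Step~2 bound is expected to yield $\sum_k d(x_s^{k+1}, x_s^k) < \infty$, so that $(x_s^k)_k$ is Cauchy in $M^{m\times n}$. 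Since every Cartan-Hadamard manifold is complete by Hopf--Rinow, the product space $M^{m\times n}$ is complete as well and the Cauchy sequences converge. Finally, Step~2 forces all limits $x_1^\infty,\ldots,x_R^\infty$ to coincide, which is the claimed convergence of the algorithm.
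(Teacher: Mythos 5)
There is a genuine gap in your Step~1, and your Step~2 inherits it. Testing the optimality of $x_s^{k+1}$ against $\bar x=f$ gives
\[
pR\omega_s\alpha\Psi_{a_s}(x_s^{k+1})+d^p(x_s^{k+1},f)+\mu_k d^p(x_s^{k+1},x_{s-1}^{k+1})\leq pR\omega_s\alpha\Psi_{a_s}(f)+\mu_k d^p(f,x_{s-1}^{k+1}),
\]
so the quantity you want to bound, $d^p(x_s^{k+1},f)$, is controlled only by $C+\mu_k d^p(x_{s-1}^{k+1},f)$ after discarding the nonnegative coupling term. The factor $\mu_k\to\infty$ destroys any induction on $s$ and $k$; even the sharper inequality obtained by testing against $x_{s-1}^{k+1}$ (which removes the $\mu_k$ prefactor) only yields $d^p(x_s^{k+1},f)\leq C+d^p(x_{s-1}^{k+1},f)$, i.e.\ linear growth in the total number of block updates, not a uniform bound. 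Since your Step~2 needs $h_s(x_{s-1}^{k+1})-h_s(x_s^{k+1})$ bounded independently of $k$, and that difference contains $d^p(x_{s-1}^{k+1},f)$, the claimed estimate $d^p(x_s^{k+1},x_{s-1}^{k+1})\leq C_1\mu_k^{-1}$ does not follow. The paper avoids the a priori bound altogether: it keeps the signed difference $d^p(x_{s-1}^{k+1},f)-d^p(x_s^{k+1},f)$, bounds it by $d^p(x_{s-1}^{k+1},x_s^{k+1})$ via the reverse triangle inequality, and \emph{absorbs} that term into the left-hand side, which yields the clean pointwise bound $d^p(x_s^{k+1},x_{s-1}^{k+1})\leq C/(\mu_k-1)$ with no boundedness of the iterates required. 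Your proposal is missing exactly this absorption step.

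Your Step~3 is also not carried out but, more importantly, it is far heavier than necessary: once $d(x_s^{k+1},x_{s-1}^{k+1})\leq D\mu_k^{-1/p}$ holds for every $s$ (with the convention $x_0^{k+1}=x_R^k$), a single application of the triangle inequality around one full sweep gives $d(x_R^{k+1},x_R^k)\leq\sum_{s=1}^R d(x_s^{k+1},x_{s-1}^{k+1})\leq RD\mu_k^{-1/p}$, and the summability hypothesis $\sum_k\mu_k^{-1/p}<\infty$ makes $(x_R^k)_k$ Cauchy in the complete space $M^{m\times n}$; no Reshetnyak quadrilateral inequality or pairing of optimality conditions across consecutive outer iterations is needed. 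Your final observation that the vanishing constraint violations force all blocks to share the same limit is correct and matches the paper.
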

	The proof is given in Appendix~\ref{secAppend:Convergence2D}.

%%!TEX encoding = UTF-8 Unicode 
% !TEX root = pottsManifoldOC.tex

\begin{figure}
	\def\subfigwidth{0.23\textwidth}
	\def\figurewidth{1\textwidth}
	\def\hs{\hspace{0.05\textwidth}}
	\centering
	
	\begin{subfigure}[t]{\subfigwidth}
		\includegraphics[width=\figurewidth]{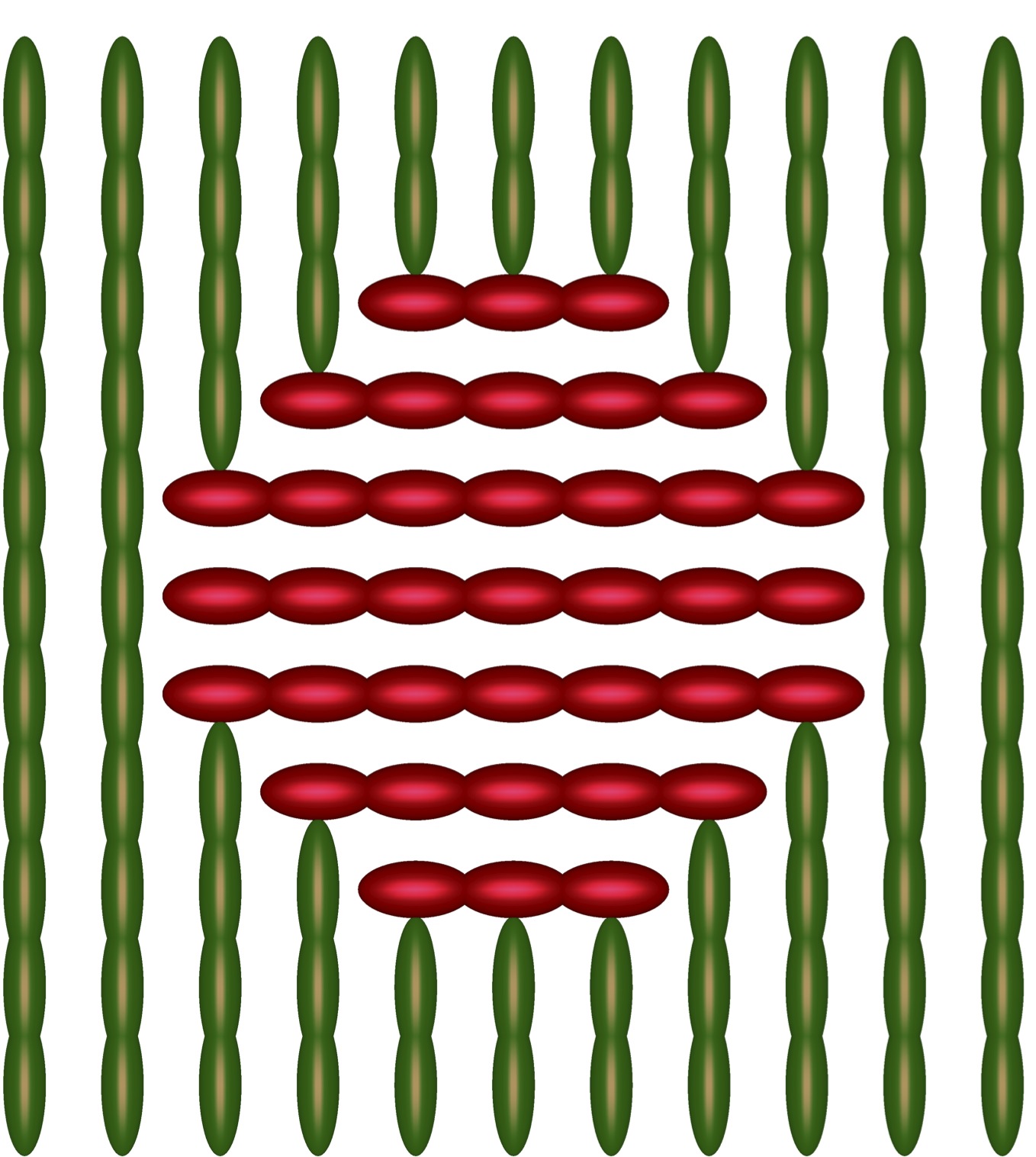}
		\caption{}
	\end{subfigure}
	\hs
	\begin{subfigure}[t]{\subfigwidth}
		\includegraphics[width=\figurewidth]{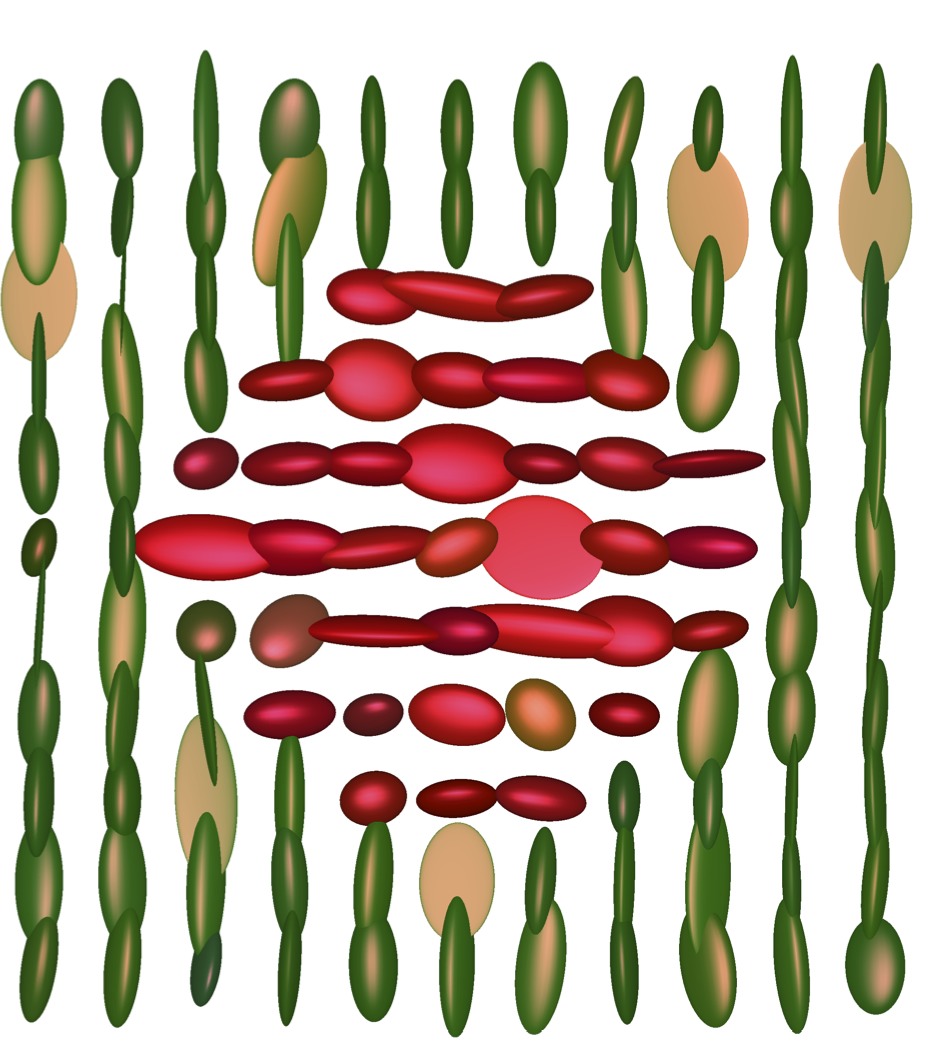}
		\caption{    }
	\end{subfigure}
	\\
	
	\begin{subfigure}[t]{\subfigwidth}
		\includegraphics[width=\figurewidth]{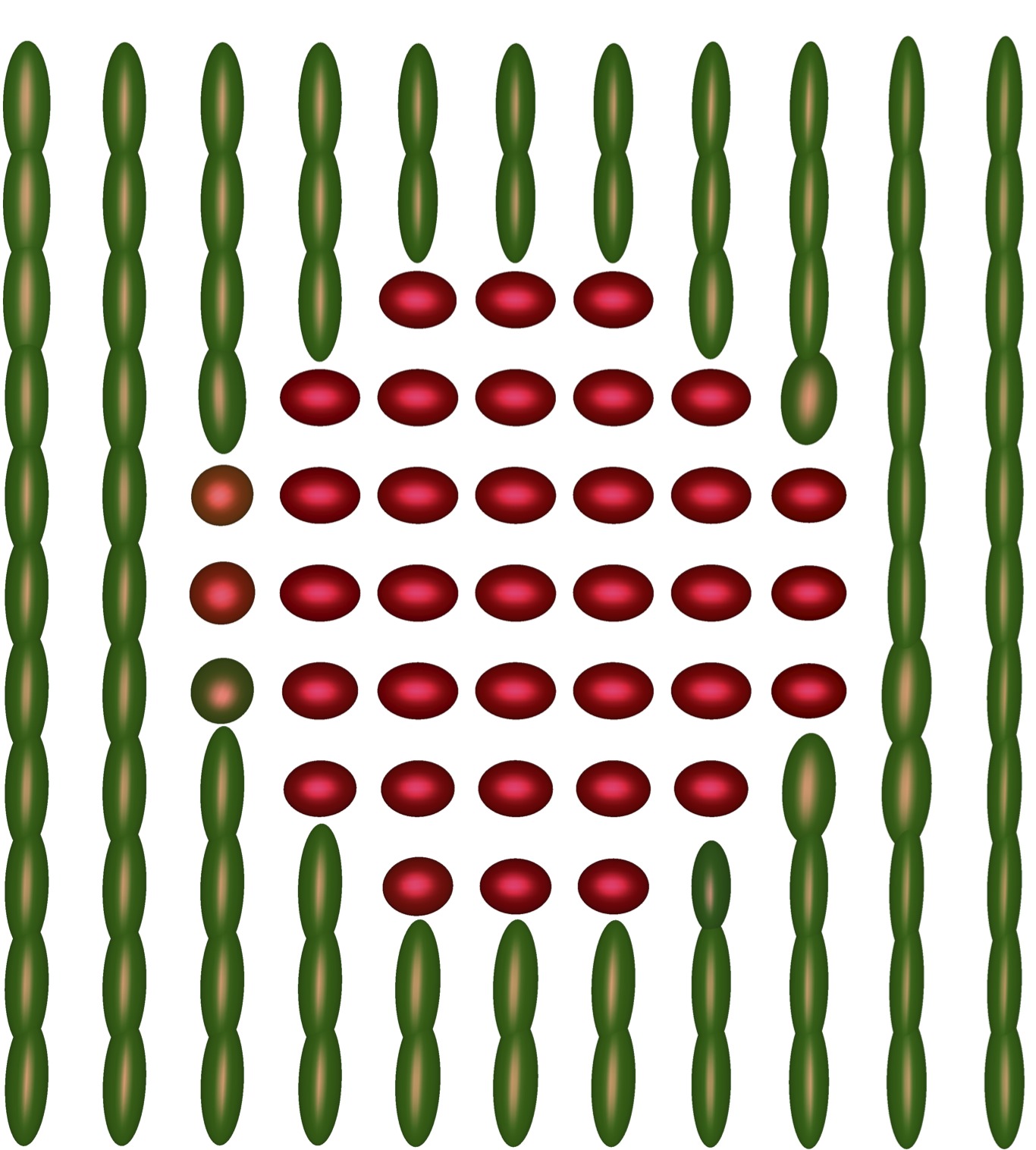}
		\caption{}
	\end{subfigure}
	\hs
	\begin{subfigure}[t]{\subfigwidth}
		\includegraphics[width=\figurewidth]{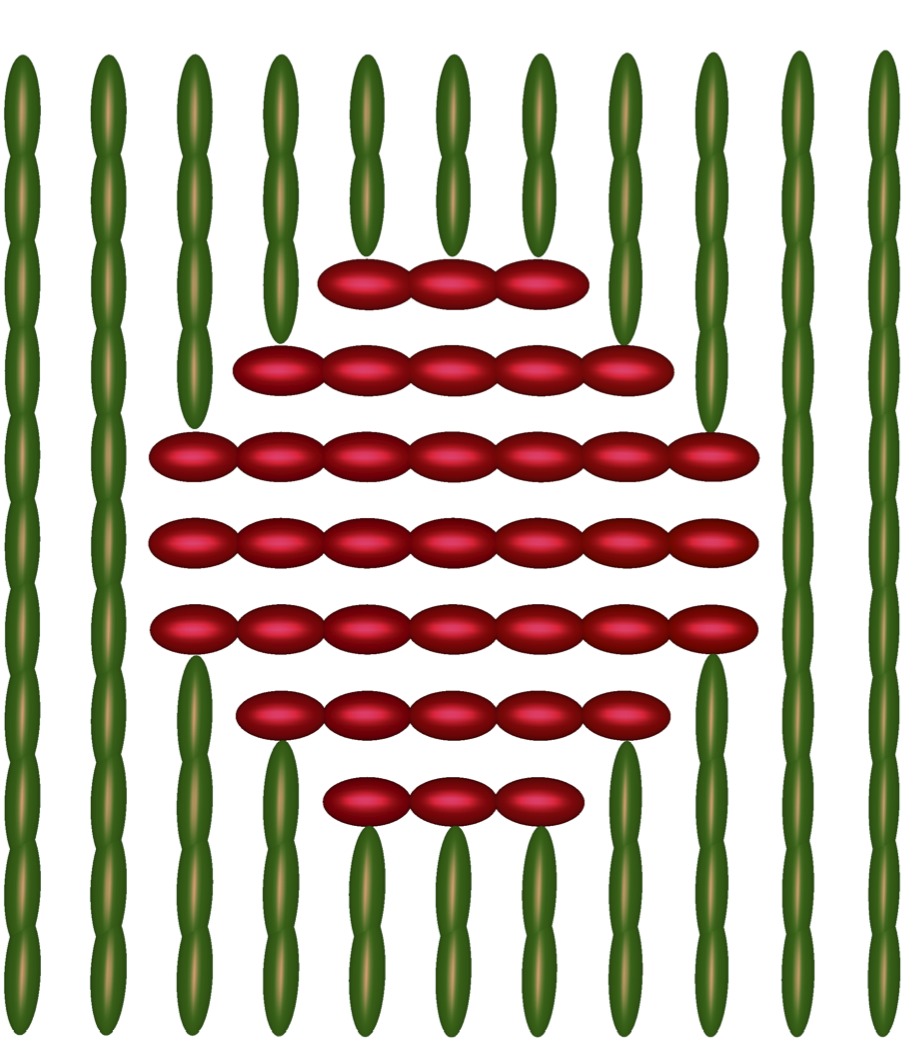}
		\caption{}
	\end{subfigure}
	\caption{(a) Synthetic DT image, 
		(b) noisy data (Rician noise of level $75$),
		(c) $L^1$-$TV$ reconstruction  (using TV parameter $\alpha = 0.65$), 
		(d) Potts reconstruction ($p,q=1$) with parameter $\gamma = 3.75$.
		While the $L^1$-$TV$ reconstruction decreases the contrast significantly, 
		the Potts method yields an almost perfect reconstruction.
	}
	\label{fig:synthDTI2D}
\end{figure}

%%%%%%%%%%%%%%%%%%%%%%%
\section{Application to Diffusion Tensor Images}
\label{sec:DTI_manifold}
%%%%%%%%%%%%%%%%%%%%%%%

The first application of our method is edge preserving denoising
and segmentation of diffusion tensor images.
Diffusion tensor imaging (DTI) is a non-invasive modality for medical imaging quantifying diffusional characteristics of a specimen.
It is based on nuclear magnetic resonance \cite{basser1994mr, johansen2009diffusion}.
Prominent applications are the determination of fiber tract orientations \cite{basser1994mr}, the detection of brain ischemia \cite{le2001diffusion}, and studies on autism \cite{alexander2007diffusion}, to mention only a few.
Regularization of DT images is important in its own right and, in particular, serves as a processing step in many applications. 
It has been studied in a  number of papers; we exemplarily mention
 \cite{wang2005dti, chen2005noise,  pennec2006riemannian, basu2006rician}.

In DTI, the diffusivity of water molecules is encoded into a so-called diffusion tensor.
This means that the data sitting in each pixel (or voxel) of a diffusion tensor image
is a positive (definite symmetric) $3 \times 3$ matrix $D.$
The space of positive matrices $\Pos$
is a Riemannian manifold when equipped with the Riemannian metric
\begin{equation}\label{eq:DTImetric}
g_D(W,V) = \trace(D^{-\tfrac{1}{2}} W D^{-1} V D^{-\tfrac{1}{2}});
\end{equation}
for details, see, e.g., \cite{pennec2006riemannian}.
Here the symmetric matrices $W,V$ represent tangent vectors in the point $D.$
Besides its mathematical properties,
the practical advantage of the Riemannian metric \eqref{eq:DTImetric}
in comparison to the Euclidean metric is that it reduces the swelling effect (\cite{tschumperle2001diffusion,arsigny2005fast}).
On the flipside, the algorithms and the corresponding theory become more involved.

\subsection{Implementation of our algorithms for DTI}

We now implement our algorithms for Mumford-Shah and Potts regularization for DTI data.
Due to the generality of our algorithms, we only need an implementation of the Riemannian exponential mapping
and its inverse to make them work on the concrete manifold.
For the space of positive matrices, the Riemannian exponential mapping $\exp_D$ is given by
\[
    \exp_D(W) = D^{\frac12} \exp(D^{-\frac12} W D^{-\frac12}) D^{\frac12}.
\]
Here $D$ is a positive matrix and the symmetric matrix $W$ represents a tangent vector in $D.$
The mapping $\exp$ is the matrix exponential.
The inverse of the Riemannian exponential mapping is given
by
\[
    \exp^{-1}_D(E) = D^{\frac12} \log(D^{-\frac12} E D^{-\frac12}) D^{\frac12}.
\]
for positive matrices $D,E.$
The matrix logarithm $\log$ is well-defined since the argument is a positive matrix.
The matrix exponential and logarithm can be efficiently computed by diagonalizing
the symmetric matrix under consideration and then applying the scalar exponential and logarithm functions
to the eigenvalues.
The distance between $D$ and $E$ is just the length of the tangent vector $\exp^{-1}_D(E)$ which can be explicitly
calculated by
$
    \dist(D, E) = (\sum_{l=1}^3 \log(\kappa_l)^2)^{\frac12},
$
where $\kappa_l$ is the $l^{\mathrm{th}}$ eigenvalue of the
matrix $D^{-\frac12} E D^{-\frac12}.$

The space of  positive matrices becomes a Cartan-Hadamard manifold with
the above Riemannian metric \eqref{eq:DTImetric}.
Hence the theory developed in this paper fully applies; in particular,
the univariate algorithms for DTI data produce global minimizers for all input data (see Theorem~\ref{thm:AlgProducesMinimizers}); 
furthermore, the algorithm \eqref{eq:OurSplitting} 
 converges, and all its subproblems are solved exactly.

\subsection{Synthetic data}

\begin{figure}[t]
         \def\subfigwidth{0.35\textwidth}
        \def\figurewidth{1\textwidth}
         \def\hs{\hspace{0.05\textwidth}}
    \centering
    \begin{subfigure}[t]{\subfigwidth}
    \includegraphics[width=\figurewidth]{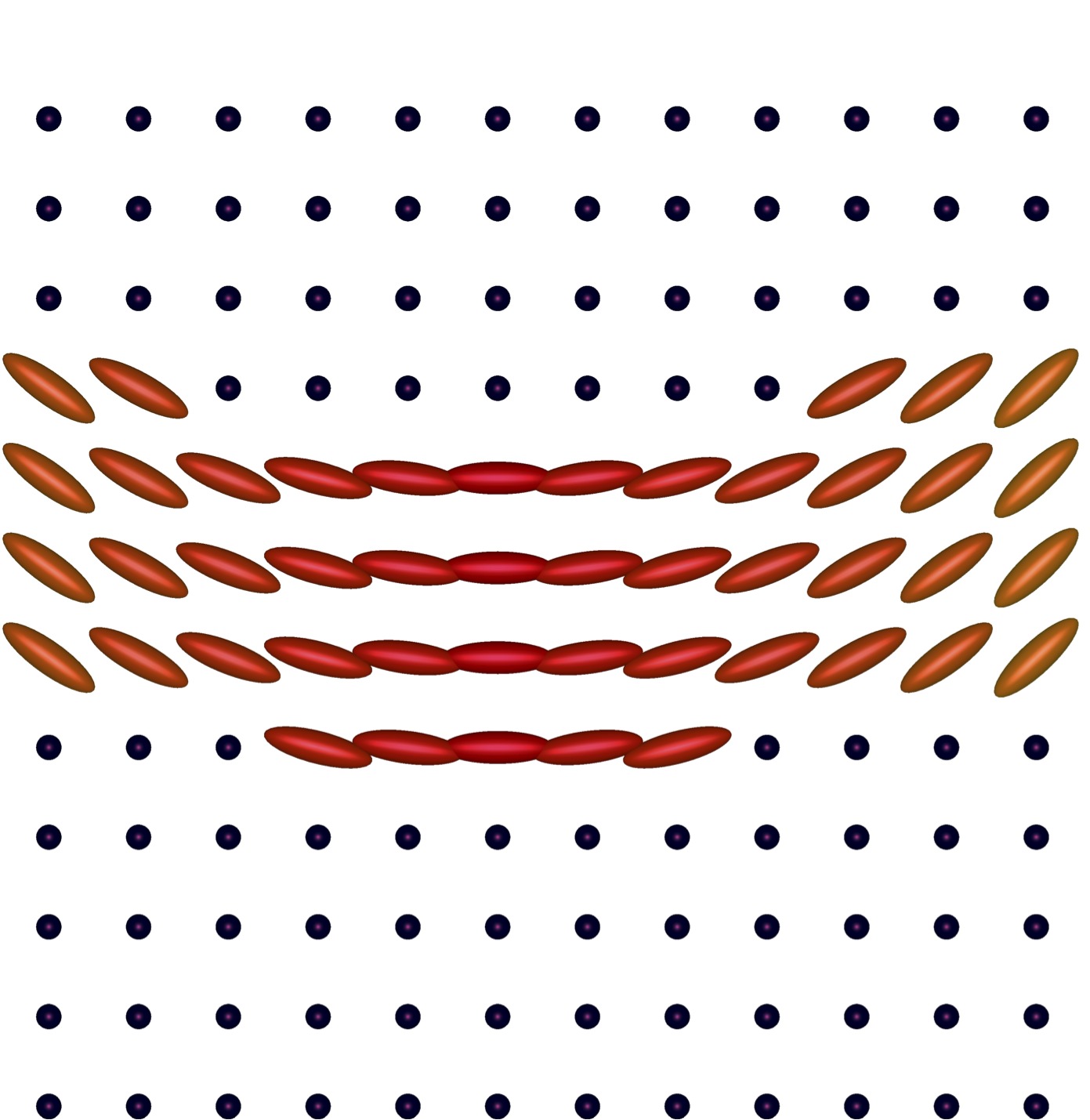}
    \caption{
    }
  \end{subfigure}
    \hs
    \begin{subfigure}[t]{\subfigwidth}
    \includegraphics[width=\figurewidth]{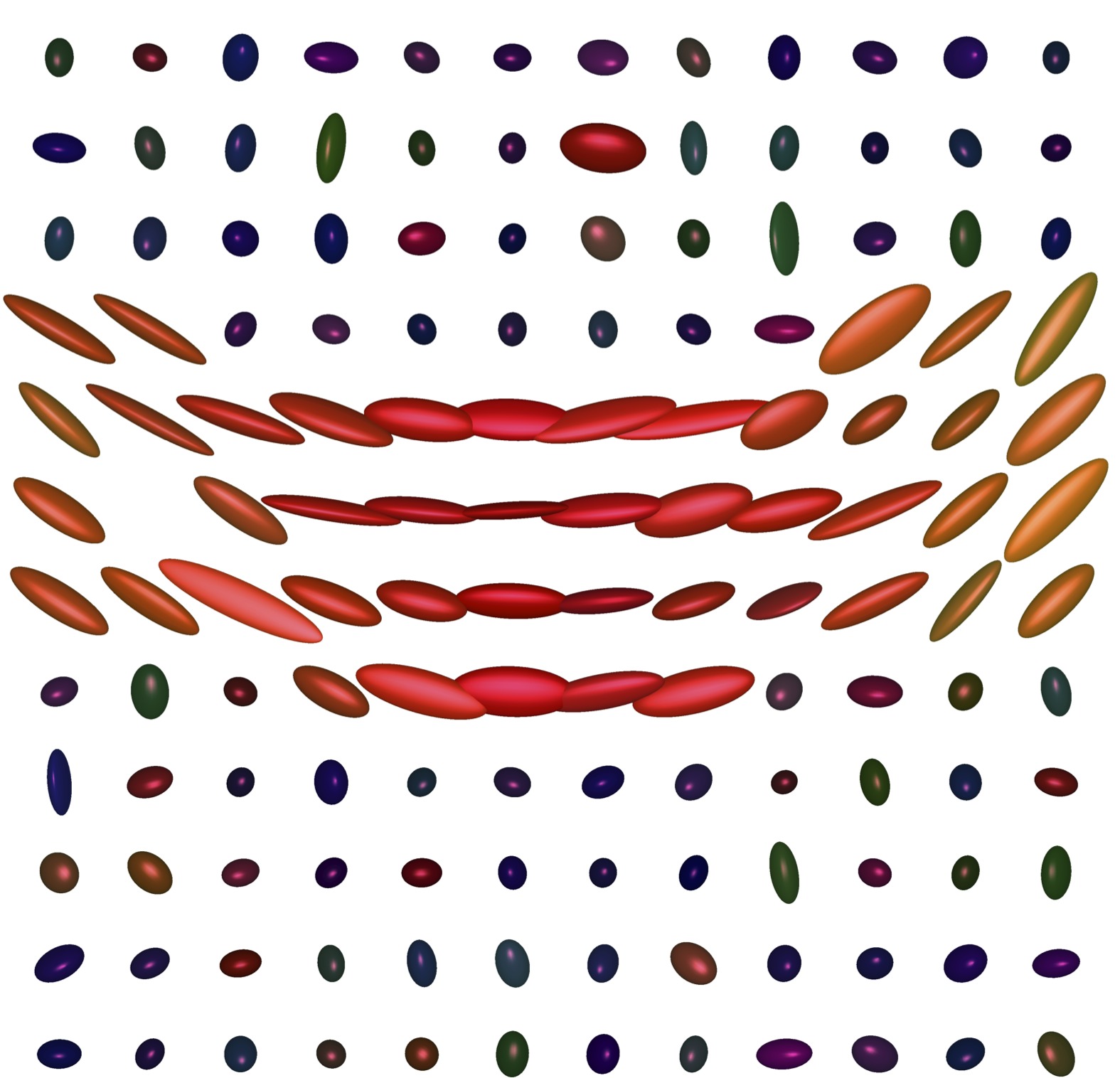}
    \caption{}
  \end{subfigure}
    \\
    \begin{subfigure}[t]{\subfigwidth}
    \includegraphics[width=\figurewidth]{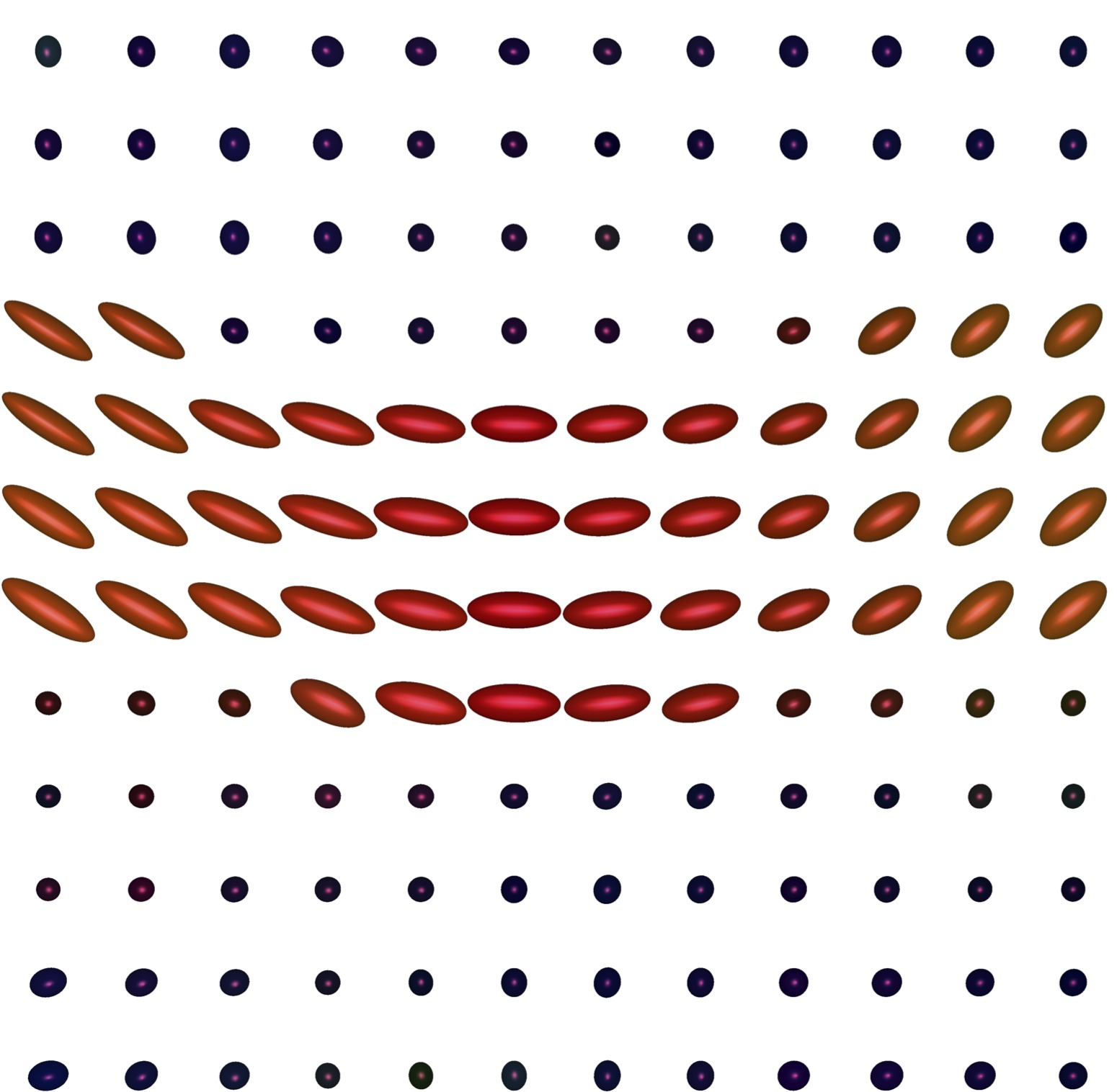}
    \caption{}
  \end{subfigure}
     \hs
    \begin{subfigure}[t]{\subfigwidth}
    \includegraphics[width=\figurewidth]{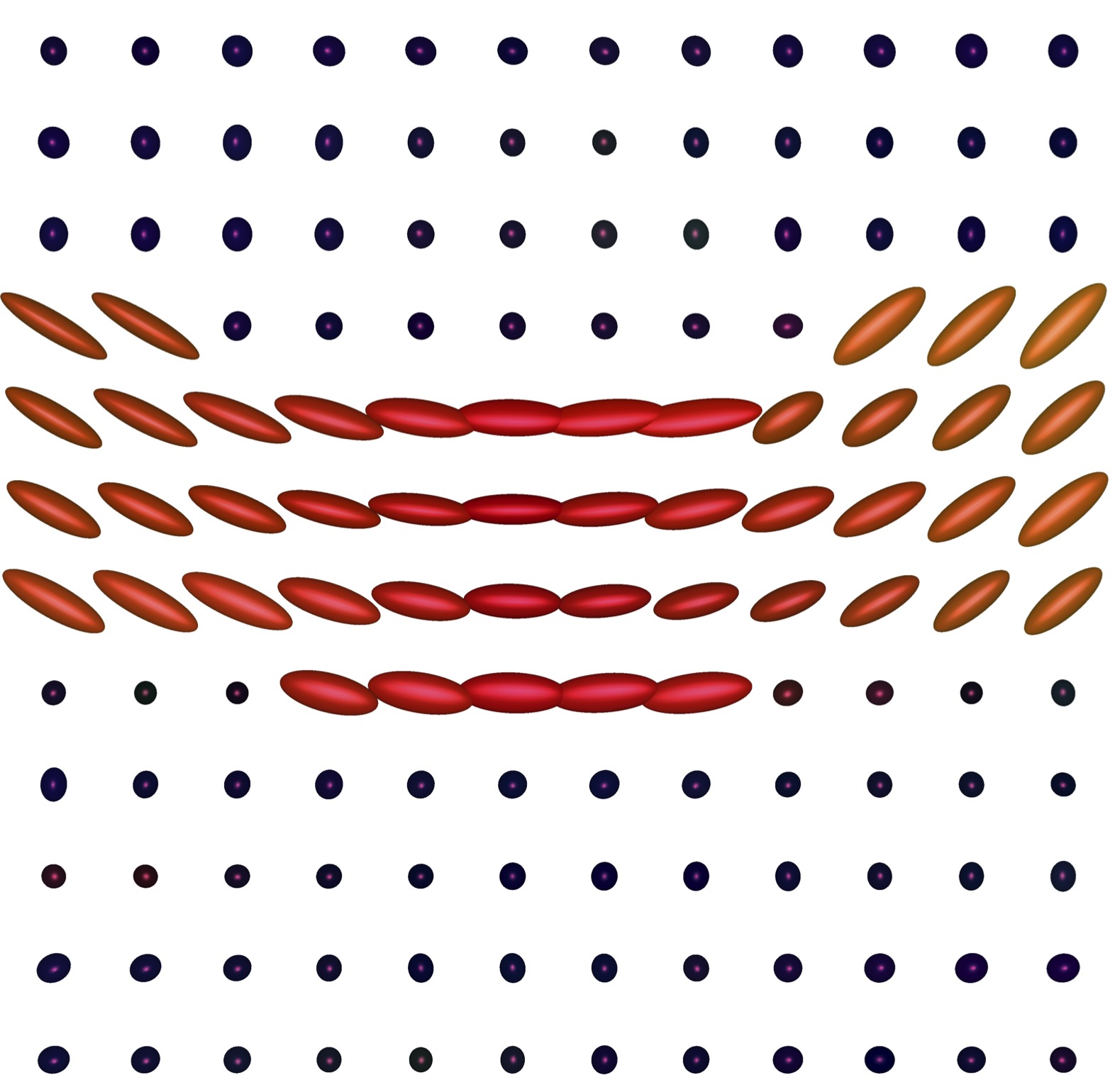}
    \caption{}
  \end{subfigure}
\caption
{(a) Synthetic DT image, 
 (b) noisy data (Rician noise of level $65$),
 (c) $L^1$-$TV$ reconstruction (TV parameter $\alpha = 0.5$), 
 (d) Mumford-Shah reconstruction ($p,q=1$) using parameters $\gamma = 0.8$ and $\alpha = 5$. 
    The noise is removed and the segments of the original image are recovered reliably.
}
\label{fig:synthCC}
\end{figure}

\begin{figure*}[t]
	 	\def\subfigwidth{0.48\textwidth}
		\def\figurewidth{0.65\textwidth}
	 	 \def\figureheight{0.7\textwidth}
		 \def\hs{\hspace{0.05\textwidth}}
	\centering
	\begin{subfigure}[t]{\subfigwidth}
    \includegraphics[width=1\textwidth, trim=0 0 0 45, clip]{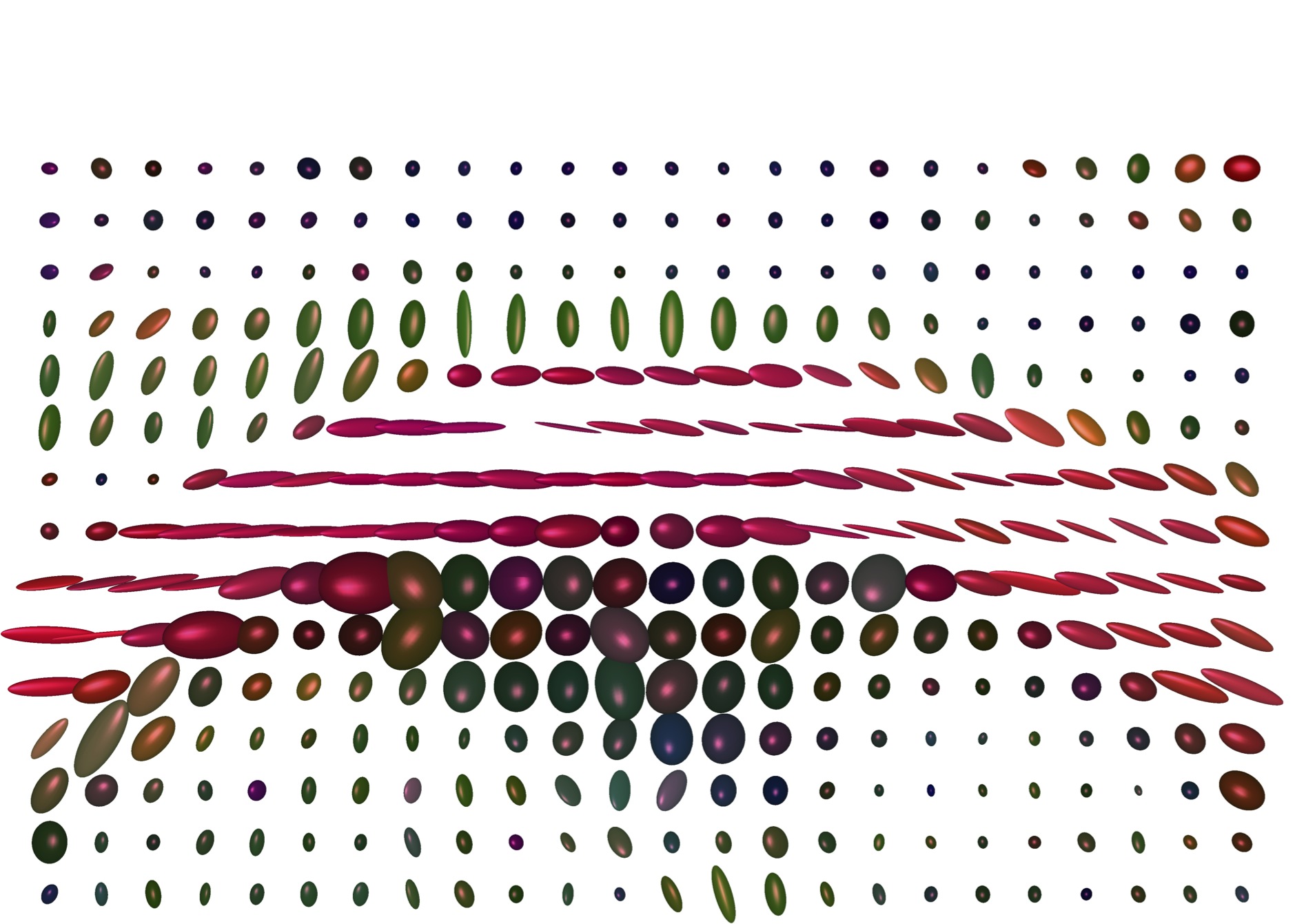} 
    \caption{}
  \end{subfigure}
  \hfill
	\begin{subfigure}[t]{\subfigwidth}
    \raisebox{0.8ex}{\includegraphics[width=0.98\textwidth, trim=0 0 0 45, clip]{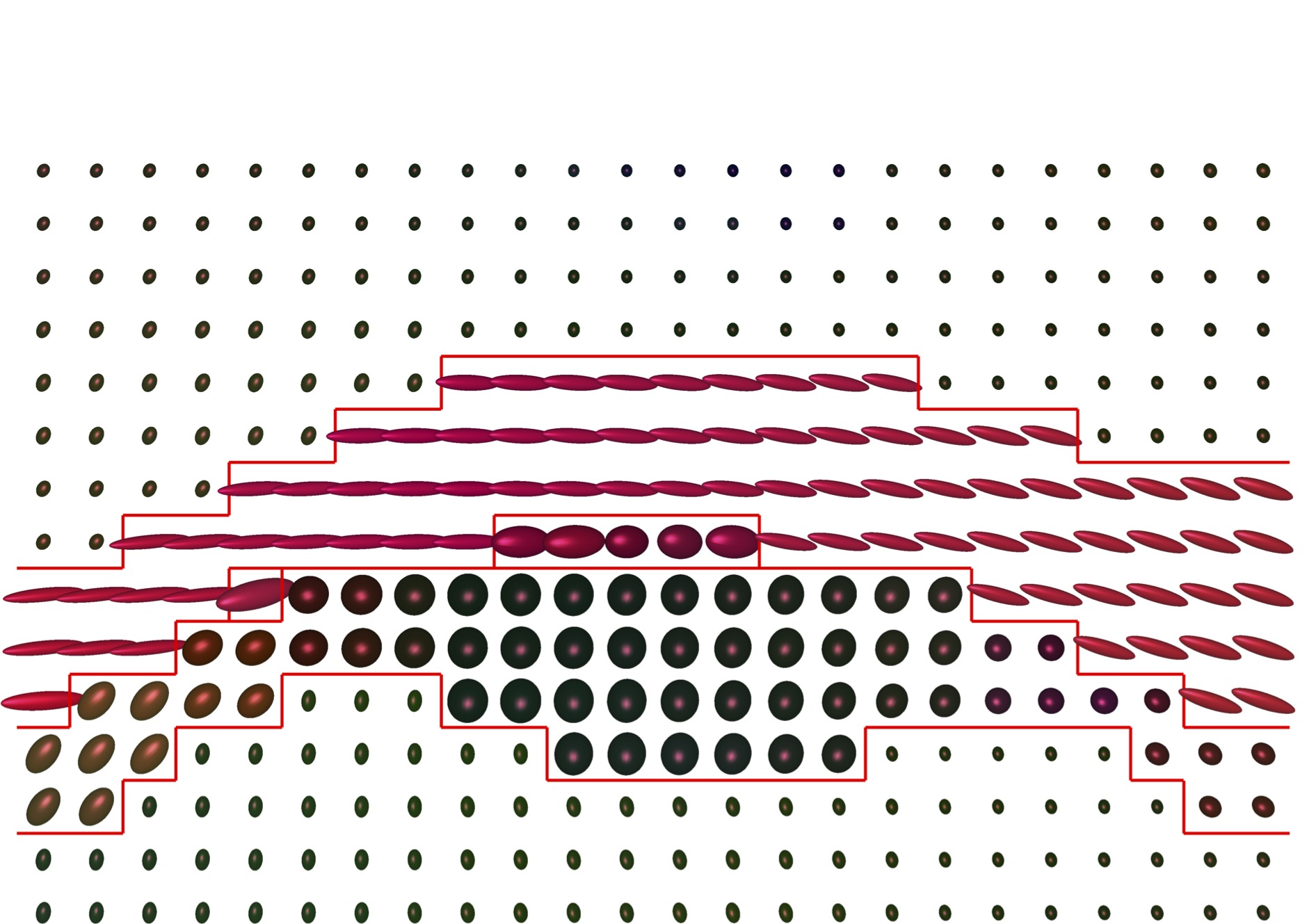}}
    \caption{}
  \end{subfigure}
\caption{(a) Corpus callosum of a human brain from the Camino project \cite{cook2006camino}. 
	(b) Mumford-Shah regularization ($p,q=1$) using parameters $\alpha= 4.3$ and $\gamma = 2.9$.
The noise is reduced significantly while the edges are preserved. 
In particular, the reconstruction induces a segmentation of
the corpus callosum and its adjacent structures (red lines).
}
\label{fig:corpusCallosum}
\end{figure*}

The data measured in DTI
are so-called diffusion weighted images (DWIs) $D_v$ which capture the directional diffusivity in the direction $v.$
The relation between the diffusion tensor image $f$ and the DWIs $D_v$
at some pixel $p$ is given by the Stejskal-Tanner equation
\begin{equation} \label{eq:StejTan}
   D_v(p) =  A_0 e^{- b \ v^T S(p) v},
\end{equation}
where $b,A_0>0$ are empirical parameters. For our simulation, we used $b = 800$ and $A_0 = 1000.$
The tensor $S(p)$ is commonly derived from the DWIs via a least square fit using \eqref{eq:StejTan}.
In our experiments we visualize the diffusion tensors by the isosurfaces of the corresponding quadratic forms. More precisely, the ellipse representing
the diffusion tensor $S(p)$ at pixel $p$ are the points $x$ fulfilling $(x-p)^T S(p)(x-p) = c,$ for some $c>0.$

We  simulate noisy data using a Rician noise model \cite{basu2006rician, fillard2007clinical}.
This means that we generate a noisy DWI $D'_v(p)$ by
\[
D'_v(p) = \sqrt{(X+ D_v(p))^2 + Y^2},
\]
with clean data $D_v(p)$ and Gaussian variables $X,Y \sim N(0,\sigma^2)$.
In our examples,
we impose Rician noise to $15$ diffusion weighted images and then compute
the diffusion tensors  according to the Stejskal-Tanner equation \eqref{eq:StejTan} using a least squares fit.
We compare our results with $L^p$-$V^q$ regularization,
i.e., with minimizers of the two-dimensional analogue of \eqref{eq:minimization_problem}
using the (globally convergent) cyclic proximal point algorithm of \cite{weinmann2013total}.
We optimized the model parameter with
respect to the error to the groundtruth.

The univariate situation is illustrated in Fig.~\ref{fig:ex2_1D} for Potts 
and in Fig.~\ref{fig:ex3_1D}
for Mumford-Shah regularization.
Fig.~\ref{fig:synthDTI2D}
shows the effect of Potts regularization
on a simple diffusion tensor image.
The noise is removed and the segment boundaries are correctly recovered.
The image in Fig.~\ref{fig:synthCC} possesses a certain variation within the segments.
Therefore the (piecewise smooth)
Mumford-Shah regularization is the proper method.
As result, we obtain a piecewise smooth denoised image with preserved sharp edges.

\subsection{Application to real data --  segmentation of the corpus callosum}
DTI is frequently used to study characteristics of the corpus callosum.
The corpus callosum
connects the right and the left hemisphere of the human brain.
Typically, the first step of
an analysis is the localization of the corpus callosum
\cite{wang2005dti, alexander2007diffusion}.
We use our Mumford-Shah method 
for the segmentation of the corpus callosum of a human brain.
This real data set stems from the Camino project \cite{cook2006camino}.
In  Fig.~\ref{fig:corpusCallosum}, we observe that our Mumford-Shah approach
removes noise and preserves
sharp boundaries between the oriented structures.
In particular, the jump set yields an accurate segmentation of the
corpus callosum.

\section{Application to Q-Balls}
\label{sec:QBall_manifold}

In DTI the diffusion at each pixel/voxel is modeled via a single tensor.
Typically, this tensor has one dominant eigenvalue with corresponding eigenvector pointing to the direction with maximal diffusivity.
This direction is directly related with pathways of, e.g., neural fibers. 
DTI encounters difficulties for modeling voxels with intravoxel directional heterogeneity which, for example, occur at crossings 
of  fiber bundles \cite{alexander2002detection,tuch2002high}.
In order to overcome these limitations, several approaches have been proposed \cite{alexander2002detection,frank2002characterization,ozarslan2003generalized,descoteaux2007regularized}.
One of the most popular among these approaches is Q-ball imaging  \cite{tuch2002high}. 
Here the tensor (seen as an ellipsoid parametrized over a ball) 
is replaced by a more general orientation distribution function (ODF) $\phi: \mathbb{S}^2 \to \R$ 
where $\phi(s)$ essentially corresponds to the diffusivity in direction $s.$  
Since the method allows for more flexibility,
high angular resolution diffusion imaging (HARDI) data (see \cite{tuch2002high,tuch2004q}) are needed.
Further information can be found in the latter references.

\subsection{The Q-ball manifold and the implementation of our algorithm for Q-ball imaging}

In order to derive a Riemannian structure on the Q-ball manifold we follow the approach of \cite{goh2009nonparametric}.
The points in the (discrete) Q-ball manifold are ``square-root parametrized''
(discrete) ODFs which are a kind of samples of continuous ODFs $\phi: \mathbb{S}^2 \to \R$
on a finite subset $S$ of the sphere $\mathbb{S}^2$ with a preferably almost equidistant sampling. To be precise, 
a discrete ODF is a positive function  $\phi: S \to \R$ such that $\sum_{s \in S} \phi^2(s) =1$ (as proposed in \cite{goh2009nonparametric}).
Hence, a discrete ODF can be identified with a point on the sphere $\mathbb{S}^{n-1}.$
Then the set $\Phi $ of all discrete ODFs is the intersection of the positive quadrant with the unit sphere in $\R^n,$ 
and thus can be endowed  with the Riemannian structure inherited from $\mathbb{S}^{n-1}$. 
Then the corresponding metric for the Q-ball manifold is given by 
$$
    d(\phi_1,\phi_2) = \arccos \left(\sum_{s \in S}\phi_1(s) \phi_2(s)\right), \mbox{ for } \phi_1,\phi_2 \in \Phi. 
$$
The basic Riemannian operations have simple closed expressions.
For a point $\phi$ on the unit sphere $\mathbb{S}^{n-1}$ in $\R^n$ and a non-zero tangent vector $v$ to the sphere at $\phi$,
the exponential mapping is given by
$$
   \exp_{\phi} (v) = \phi \cdot \cos \|v\| + \frac{v \cdot \sin \|v\|}{\|v\|},
$$
where $\|\cdot\|$ denotes the Euclidean norm in $\R^n$. The inverse of the exponential mapping is defined for any pair of points 
$\phi_1,\phi_2 \in \Phi$ by 
$$
   \exp^{-1}_{\phi_1}(\phi_2) = d(\phi_1,\phi_2) \cdot \frac{\phi_2- \ps{\phi_1,\phi_2}\phi_1}{\|\phi_2- \ps{\phi_1,\phi_2}\phi_1\|}.
$$
These explicit formulas for the Riemannian $\exp$ mapping and its inverse enable us to directly apply our algorithms for the regularization of 
Q-ball data.

\subsection{Numerical experiments}

We apply our algorithm to synthetic Q-ball data.  
Our examples simulate situations where two fiber bundles intersect. 
In the examples the size of the sampling set on the $2$-sphere is $n=181$ directions.
In order to simulate noisy data,  we use  the method based on the so-called \enquote{soft equator approximation}  
\cite{tuch2004q}. 
We visualize a discrete ODF as a spherical polar plot. 
We compare our results with classical $L^2$-Sobolev 
regularization ($L^2$-$V^2$)
using the cyclic proximal point algorithm of \cite{weinmann2013total}.

Our first example is a univariate signal (Fig.~\ref{fig:QB_ex11_bz_partition}). It contains two kinds of Q-balls: one ``tensor-like'' with a single peak and another one with two peaks.
This illustrative example shows that, also in the Q-ball case, 
our regularization method removes the noise while preserving the jump and its location.

Our second experiment is a Q-ball valued image which simulates the crossing of two fiber bundles (Fig.~\ref{fig:qball2D}). 
Here, we observe that our method removes the noise while preserving the fiber crossing and the directional structures encoded in the Q-balls as well as the edge structure in the image.

\begin{figure}
	 	\def\subfigwidth{0.65\textwidth}
		\def\figurewidth{1\textwidth}
		 \def\hs{\hspace{-0.02\textwidth}}
		 \def\vs{\vspace{0.02\textwidth}}
	\centering
	\hs
	\begin{subfigure}[t]{\subfigwidth}
    \includegraphics[height=\figurewidth, angle=90]{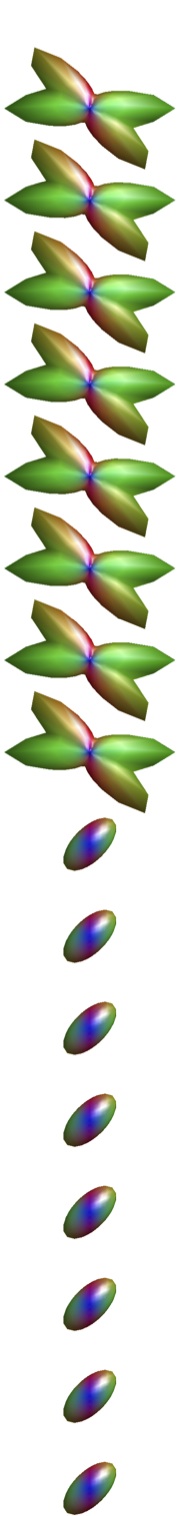}
    \caption{
    }
  \end{subfigure}\\
	\vs
		\hs
	\begin{subfigure}[t]{\subfigwidth}
    \includegraphics[height=\figurewidth, angle=90]{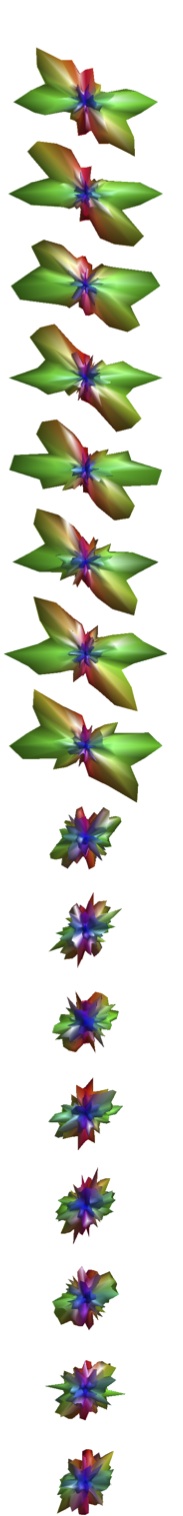}
    \caption{}
  \end{subfigure}\\
  	\vs
	  \hs
  		\begin{subfigure}[t]{\subfigwidth}
		    \includegraphics[height=\figurewidth, angle=90]{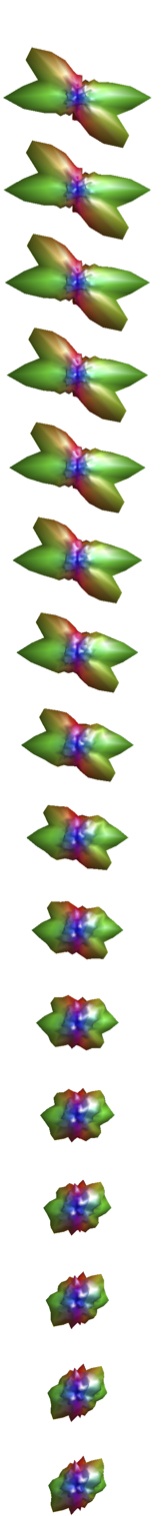}
    \caption{}
  \end{subfigure}\\
  \vs
	\hs
	\begin{subfigure}[t]{\subfigwidth}
    \includegraphics[height=\figurewidth, angle=90]{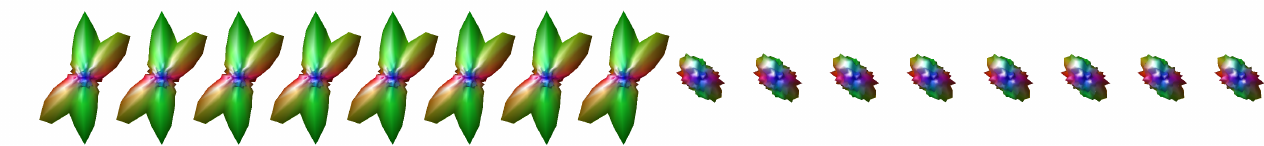}
    \caption{}
  \end{subfigure}
\caption{
	(a) Synthetic piecewise smooth Q-ball signal, 
	(b) noisy data,  
    (c) the manifold analogue of classical Sobolev regularization ($L^2$-$V^2$ with $\alpha=50$),
    (d) Mumford-Shah regularization ($p,q=2$) with parameters $\alpha = 25,\gamma= 0.5$.
    Classical Sobolev regularization removes the noise, but it smoothes out the jump;
    in contrast, the Mumford-Shah regularization removes the noise and preserves the jumps.
    }
   \label{fig:QB_ex11_bz_partition}
\end{figure}

\begin{figure*}
\def\subfigwidth{0.22\textwidth}
		\def\figurewidth{1\textwidth}
				 \def\hs{\hfill}
\tikzstyle{myspy}=[spy using outlines={cyan,dashed,lens={scale=3},width=1.0\textwidth, height=1.0\textwidth, connect spies, 
every spy on node/.append style={thick}}]
\def\figwidth{0.48\columnwidth}
\def\nodeSpy{(-0.45, 0.45)}
\def\nodeWindow{(1.3,-3.0)}
	\begin{subfigure}[t]{\subfigwidth}
	\begin{tikzpicture}[myspy]
	\node {\includegraphics[interpolate=true,width=\figurewidth, trim=45 45 47 55, clip]{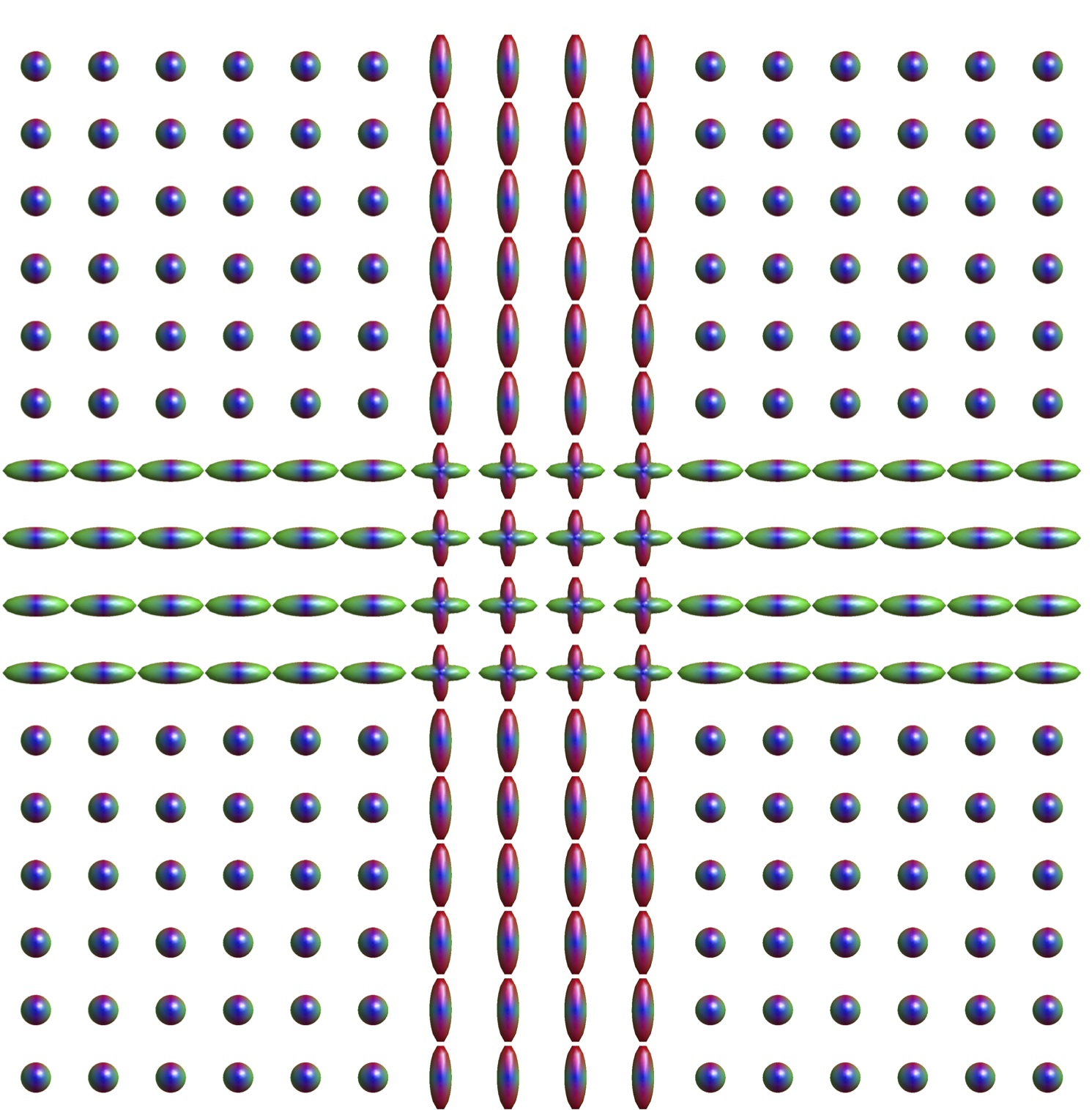}};
	\spy on \nodeSpy in node [left] at \nodeWindow;
\end{tikzpicture}
    \caption{}
  \end{subfigure}
  \hs
  	\begin{subfigure}[t]{\subfigwidth}
  	\begin{tikzpicture}[myspy]
	\node {\includegraphics[interpolate=true,width=\figurewidth, trim=45 45 47 55, clip]{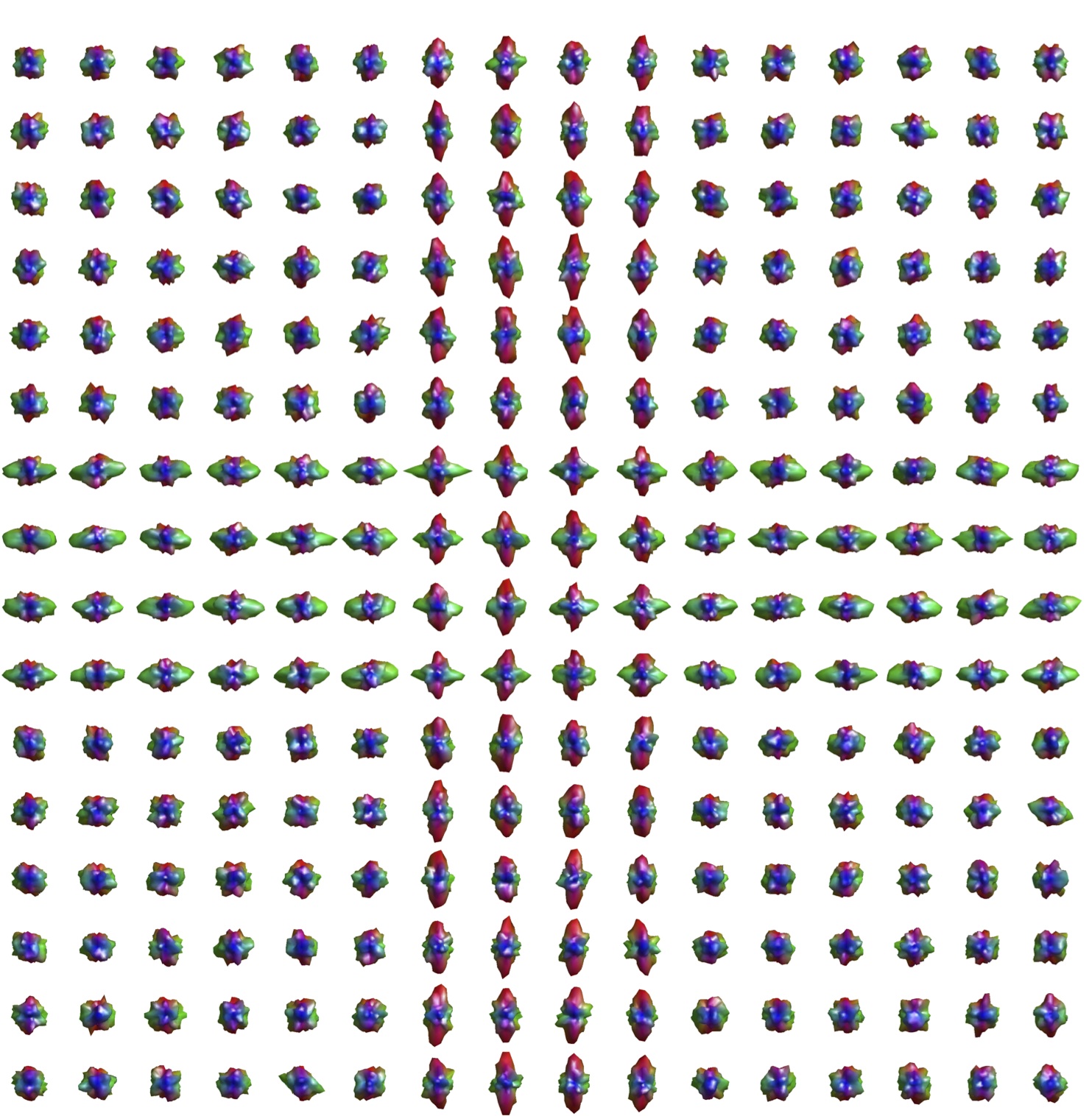}};
	\spy on \nodeSpy in node [left] at \nodeWindow;
\end{tikzpicture}
    \caption{}
  \end{subfigure}
    \hs
  	\begin{subfigure}[t]{\subfigwidth}
	\begin{tikzpicture}[myspy]
	\node {\includegraphics[interpolate=true,width=\figurewidth, trim=45 45 47 55, clip]{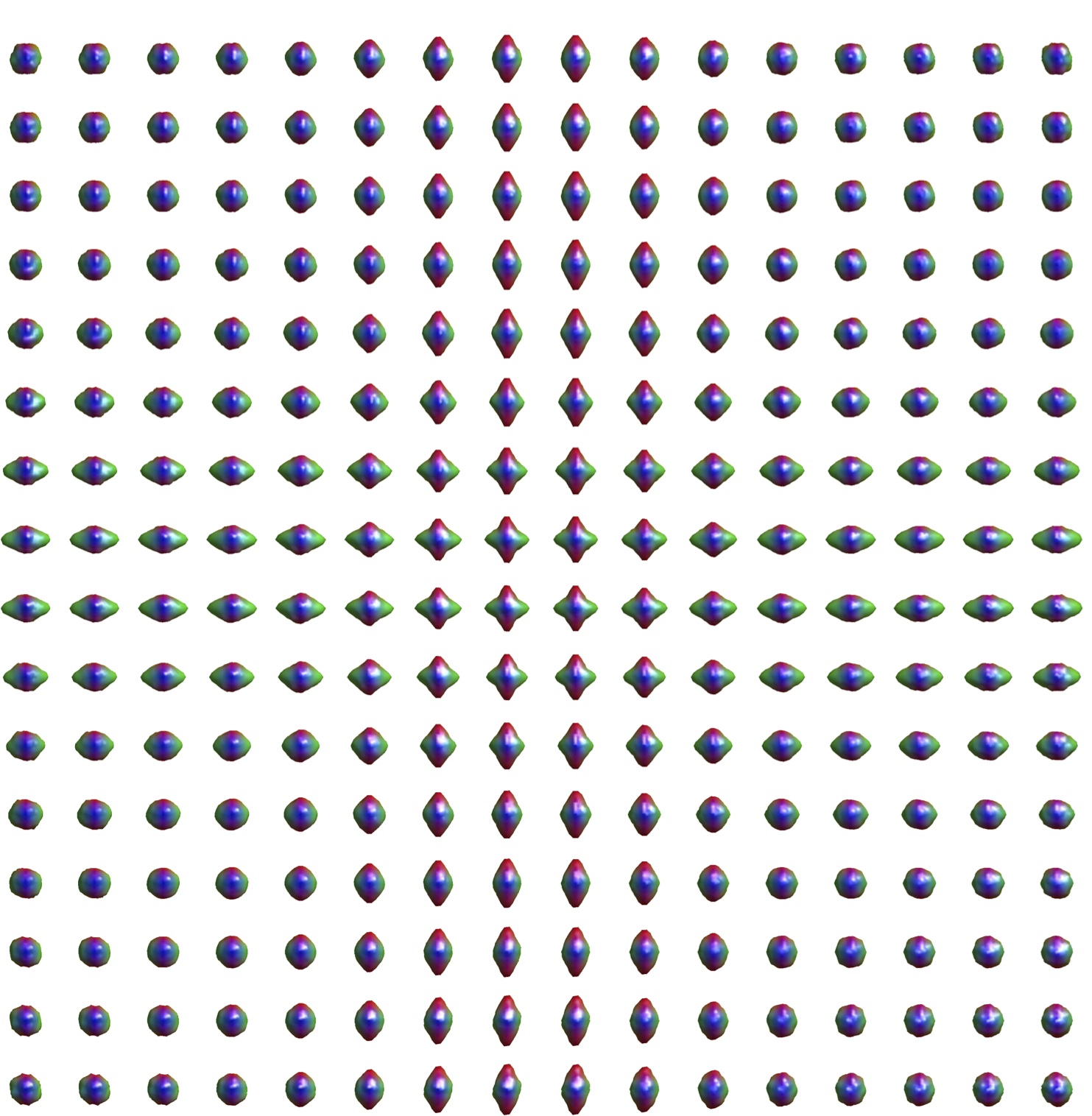}};
	\spy on \nodeSpy in node [left] at \nodeWindow;
\end{tikzpicture}
    \caption{}
  \end{subfigure}
    \hs
  	\begin{subfigure}[t]{\subfigwidth}
	\begin{tikzpicture}[myspy]
	\node {\includegraphics[interpolate=true,width=\figurewidth, trim=45 45 47 55, clip]{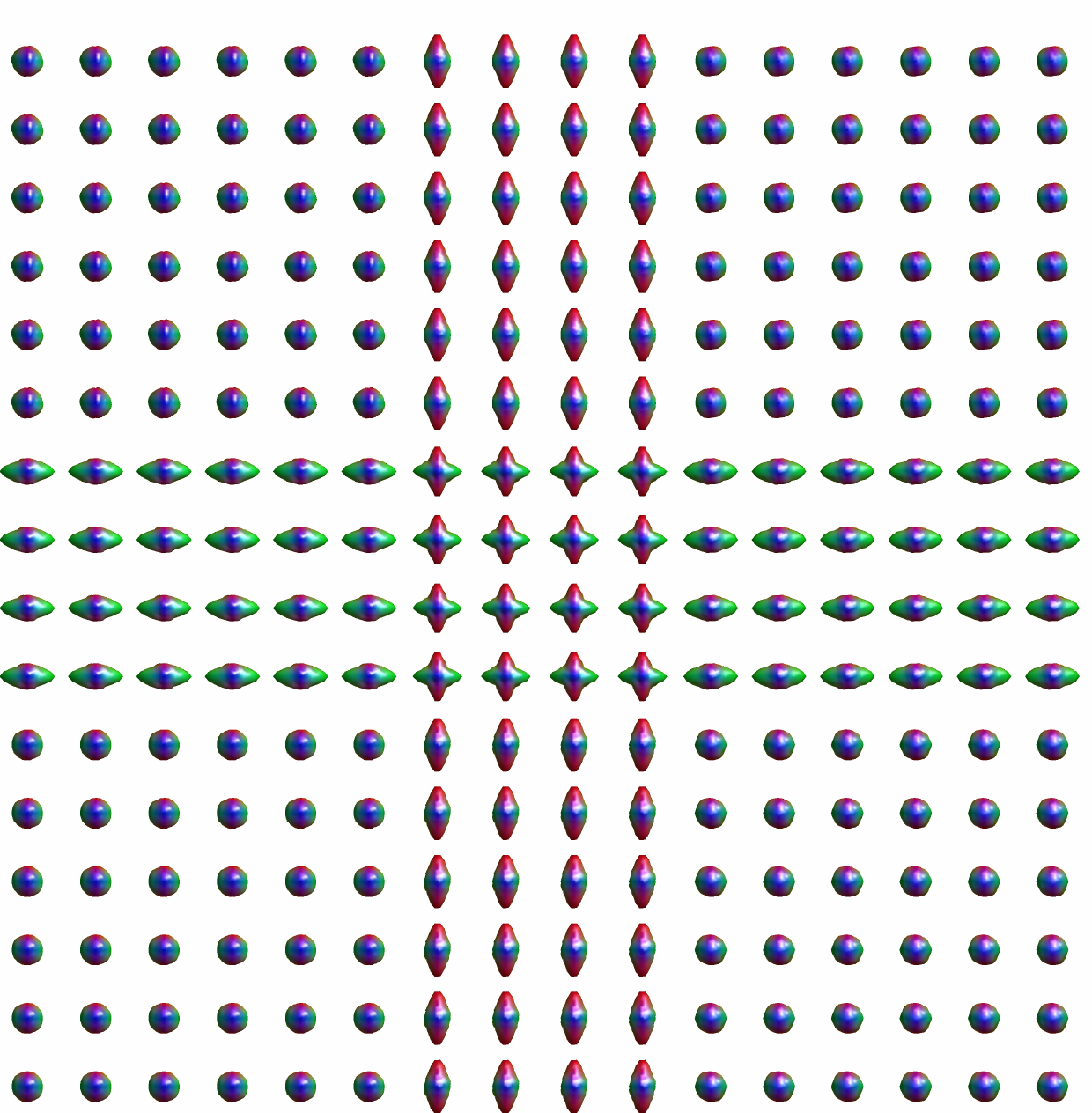}};
	\spy on \nodeSpy in node [left] at \nodeWindow;
\end{tikzpicture}
    \caption{}
  \end{subfigure}
  \caption{
  (a) Synthetic Q-Ball image, 
  (b) noisy data,
  (c) the manifold analogue of classical Sobolev regularization ($L^2$-$V^2$ with parameter $\alpha=1$),
  (d)~Mumford-Shah regularization ($p,q= 2$) with parameters $\alpha=32,\gamma=0.03$. 
  The Sobolev regularization smoothes out the edges and the crossing structures. 
  The Mumford-Shah method recovers the edges as well as the crossings of the original image reliably.
}
\label{fig:qball2D}
\end{figure*}

\section{Conclusion and future research}

In this paper, we proposed algorithms for the non-smooth and non-convex
Mumford-Shah and Potts functionals for manifold-valued signals and images.
We have shown the potential of our method by applying it to DTI and Q-ball imaging.
Using the DTI model, we obtained a segmentation of the corpus callosum.
For signals with values in Cartan-Hadamard manifolds (which includes the data space in diffusion tensor imaging), 
we have seen that our
algorithms for univariate data produce global minimizers for any starting point.
For the Mumford-Shah and Potts problems for image regularization (which is a NP hard problem) 
we have obtained convergence of the proposed splitting approach.

Topics of future research are the application of our algorithms to further nonlinear data spaces
relevant for imaging. Another issue is to build a segmentation pipeline based on our method.
Finally, from a theoretical side, it is interesting to further investigate convergence related questions for general
Riemannian manifolds.

\appendix

% !TEX root = pottsManifoldOC.tex

\section{Existence of minimizers}\label{secAppendix:ExMinimizers}

We supply the proofs of Theorem~\ref{thm:ExMinimizers2D} and Theorem~\ref{thm:ExMinim1D} 
which are statements on the existence of minimizers.

\begin{proof}[Proof of Theorem~\ref{thm:ExMinimizers2D}]
	
	We first show that the Mumford-Shah version of the discretization \eqref{eq:msDiscrete} has a minimizer. 
	In the Mumford-Shah case, $\psi$ is the truncated power function given by \eqref{eq:psiMumfi}. 
	Since $\psi$ is continuous, so is $\Psi_{a_s}$ for all $s$ 
	and therefore the whole functional given by \eqref{eq:msDiscrete} is continuous.
	On the other hand, the data term $d^p(x, f)$ is obviously coercive with respect to the Riemannian distance.
	This makes the overall functional coercive and confines points with small functional value to a bounded set.
	Since the manifold under consideration is complete, points with small functional value are confined to a compact set.
	Hence, the continuous functional takes its minimal value on this compact set and the corresponding point is a minimizer.
	
	We come to the discrete Potts functional. Here we consider the discretization \eqref{eq:msDiscrete} where 
	$\psi$ is implemented by \eqref{eq:psiPotts}. With the same argument as for the  Mumford-Shah functional above, the 
	Potts functional is coercive with respect to the Riemannian distance. We show its lower semicontinuity.
	We have a look at $\Psi_{a_s}$ which can be written as a sum of univariate jump functionals for manifold-valued data
	of the form $S: u \mapsto |\mathcal J(u)|$ from the Riemannian manifold $M^j$ to the nonnegative integers 
	(where $j$ is the varying length of the data under consideration.) If these functionals $S$ were not lower semicontinuous,
	there would be a convergent sequence $u^n \to u$ with each $u^n \in M^j$ such that $|J(u)|>|J(u^n)|$ for sufficiently high indices $n.$
    Since $u^n \to u$ componentwise (with respect to the distance induced by the  Riemannian metric), 
    we get, using the triangle inequality, that
    $$d(u^n_k;u^n_{k-1}) \to d(u_k;u_{k-1}).$$ 
    This contradicts $u$ having more jumps than $u^n.$ Hence, the functionals $S$ and, as a consequence,
	the functionals $\Psi_{a_s}$ are lower semicontinuous. Using the continuity of the data term the discretization \eqref{eq:msDiscrete}
	of the Potts functional is lower semicontinuous. By its coercivity and the completeness of the manifold $M,$ arguments with a small 
    Potts value are located in a compact set. Hence, in the Potts case, \eqref{eq:msDiscrete} has a minimizer.
    This completes the proof.
\end{proof}

\begin{proof}[Proof of Theorem~\ref{thm:ExMinim1D}]
	The assertion is a consequence of Theorem~\ref{thm:ExMinimizers2D} 
	when specifying to data defined on $\{1,\ldots,n\} \times \{1\}$ choosing as single direction $a_1=(1,0)$.  	
\end{proof}

\section{Univariate Mumford-Shah and Potts al\-go\-rithms}
\label{secAppend:UnivarMinimAlgo}

We supply the proof of Theorem~\ref{thm:AlgProducesMinimizers} which states that the algorithms proposed for the univariate problems produce global minimizers when the data live in a Cartan-Hadamard manifold.

\begin{proof}[Proof of Theorem~\ref{thm:AlgProducesMinimizers}]
	We start with the Mumford-Shah problem for manifold-valued data.
	For $l=1, \hdots, r$, we consider the first $l-1$ data items  $f_{1:l-1} = (f_1, \ldots, f_{l-1})$. 
	We let $x^{l-1}$ be a minimizer of the corresponding functional $B_{\alpha,\gamma}^{l-1}$ for the truncated data  $f_{1:l-1}$.
	Moreover, we let $h^{l,r} \in M^{r-l+1}$ be the result 
	computed by our algorithm for the minimization of $V_\alpha$ according to Section \ref{subsec:AlgMumf1d}
	for data $f_{l:r}$.
	Since we are in a Cartan-Hadamard manifold, $h^{l,r}$ is a global minimizer of $V_\alpha$ 
	by Theorem~2 in \cite{weinmann2013total}.	
	With each $l$ we associate the candidate $x^{l,r}=(x^{l-1},h^{l, r}).$
	On the other hand we consider an index $l^\ast$ minimizing \eqref{eq:potts_value_candidate}.
	We claim that the candidate $x^{l^\ast,r}$ is a minimizer of $B_{\alpha,\gamma}^{r}.$
	To see this, consider an arbitrary $x \in M^r$ and let $k$ be its rightmost jump point $k$.
	If there is no such $k,$ then $x$ has no jumps and 
	$$
	B_{\alpha,\gamma}^{r}(x) = V_\alpha(x) \geq V_\alpha(x^{1,r}) \geq B_{\alpha,\gamma}^{r}(x^{l^\ast,r}).
	$$
	The penultimate inequality is due to the fact that $x^{1,r}$ is a global minimizer of $V_\alpha$ in a Cartan-Hadamard manifold.
	The last inequality follows from \eqref{eq:potts_value_candidate}.  
	If $k$ is the rightmost jump point of $x,$ we have 
	$$
	B_{\alpha,\gamma}^{r}(x) = B_{\alpha,\gamma}^{k-1}(x)+\gamma+V_\alpha(x^{l,r})  \geq B_{\alpha,\gamma}^{r}(x^{l^\ast,r})
	$$
	by \eqref{eq:potts_value_candidate}.	
	This shows the assertion of the theorem in the Mumford-Shah case using induction on $r$.	
	
	In the Potts functional case, 
    we let $x^{l-1}$ be a minimizer of the Potts functionals $P_{\gamma}^{l-1}$ for the truncated data $f_{1:l-1}$.
    Then we let $h^{l,r} \in M^{r-l+1}$ be the result of the gradient (resp. subgradient) descent \eqref{eq:GradientDescentIntMean}.
    Since we are in a Cartan-Hadamard manifold, $h^{l,r}$ agrees with the constant function on $[l,r]$
    which is pointwise equal to the mean $(p=2),$ median $(p=1)$ or, in general, the minimizer of the right hand side of \eqref{eq:epsilonPotts}.
    Now we may proceed analogous to the Mumford-Shah case to conclude the assertion and complete the proof. 	
\end{proof}

We proceed showing Theorem \ref{thm:AlgProducesMinimizersAdmissPartitions} which states that our algorithm yields a minimizer
for the Potts problem when considering general complete Riemannian manifolds and candidates with admissible partitions. 

\begin{proof}[Proof of Theorem~\ref{thm:AlgProducesMinimizersAdmissPartitions}]
    We use the notation of the proof of Theorem~\ref{thm:AlgProducesMinimizers}.
    Then, the $x^{l-1}$ are minimizer of the corresponding Potts functionals $P_{\gamma}^{l-1}$ for the truncated data $f_{1:l-1}$.
    (We notice that such a minimizer exists, since an interval consisting of one member is always admissible.)
    Furthermore, for admissible intervals $[l,r]$, $h^{l,r} \in M^{r-l+1}$ is pointwise equal to the computed Riemannian mean 
    as explained in Section \ref{subsec:AlgPotts1d}. 
    The Riemannian mean minimizes the right hand side of \eqref{eq:epsilonPotts}.
    The candidates $x^{l,r}=(x^{l-1},h^{l, r})$ and the minimizing index 
    $l^\ast$ are given as in the proof of Theorem~\ref{thm:AlgProducesMinimizers} above.
    In order to show that $x^{l^\ast,r}$ is a minimizer,
    we consider an arbitrary $x \in M^r$ with an admissible partition.
    If $x$ has no jump, then
    $ P_{\gamma}(x) = \tfrac{1}{2}\sum_i d(x,f_i)^2 \geq$ $ P_{\gamma}(x^{1,r})  \geq  P_{\gamma}(x^{l^\ast,r}).$
    Otherwise, let $k$ be the rightmost jump point of $x$ (which, by assumption, comes with an admissible partition).
    Then,
    $$
    P_{\gamma}^{r}(x) = P_{\gamma}^{k-1}(x)+\gamma+V_\alpha(x^{l,r})  \geq P_{\gamma}^{r}(x^{l^\ast,r}).
    $$
    which shows that $x^{l^\ast, r}$ is a minimizer. Now induction completes the proof.
\end{proof}

\section{Mumford-Shah and Potts algorithms for images}
\label{secAppend:Convergence2D}

We supply the proof of Theorem~\ref{thm:Convergence2D} stating that the algorithm in \eqref{eq:OurSplitting} converges in a Cartan-Hadamard manifold.

\begin{proof}[Proof of Theorem~\ref{thm:Convergence2D}]	
	We show that all iterates $x^k_s$ converge to the same limit for all $s \in \{1,\ldots,R\}.$	
	Since we are in a Cartan-Hadamard manifold, $x_1^{k+1}$ is a global minimizer of the functional 
	$H_1(x) = pR\omega_1 \alpha \Psi_{a_1}(x) + d^p(x,f) + \mu_k d^p(x,x_R^k) $ which is the first problem in \eqref{eq:OurSplitting}.	
	This follows by an argument similar to the proof of Theorem~\ref{thm:AlgProducesMinimizers}.
	
	We have $H_1(x_1^{k+1}) \leq H_1(x_R^k)$ which means that 
	\begin{align}
	d^p(x^{k+1}_1, f) + \mu_k d^p(x^{k+1}_1,x_{R}^k) 
	\leq pR\omega_1 \alpha\Psi_{a_1}(x^{k}_R) + d^p(x^{k}_R, f).
	\label{eq:No1}
	\end{align}
	In analogy, we get for the  $x_s^{k+1},$ $s=2,\ldots,R$, using the other functionals in \eqref{eq:OurSplitting} that
	\begin{align}
	d^p(x^{k+1}_s, f) + \mu_k d^p(x^{k+1}_s,x_{s-1}^k) 
	\leq pR\omega_1 \alpha \Psi_{a_s}(x^{k+1}_{s-1}) + d^p(x^{k+1}_{s-1}, f).\label{eq:No2}
	\end{align}
	For both the Mumford-Shah and the Potts problem, the terms $\alpha\Psi_{a_1}(x^{k}_R)$ and $\alpha\Psi_{a_s}(x^{k+1}_{s-1}),$ 
	with $s=2,\ldots,R,$ are uniformly bounded by a constant $C$ which does not depend on $k$ and $s.$
	This is because, for any input, $\alpha \Psi_{a_s}$ is bounded by $\alpha mn$ with the regularizing parameter  $\alpha$ for the jump term of the functional under consideration, and $m$ and $n$ are the height and width of the image. 
	Hence we can use \eqref{eq:No1} and \eqref{eq:No2} to get 
	\begin{align}
	 &d^p(x^{k+1}_1,x_{R}^k) \leq  \tfrac{C}{\mu_{k}} + \tfrac{1}{\mu_{k}}(d^p(x^{k}_R, f) - d^p(x^{k+1}_1, f)),\notag \\	 	
	 &d^p(x^{k+1}_s,x_{s-1}^k) \leq \tfrac{C}{\mu_{k}}+ \tfrac{1}{\mu_{k}}(d^p(x^{k+1}_{s-1}, f) - d^p(x^{k+1}_s, f)) .
	\label{eq:No3}
	\end{align}
	Now we may apply the inverse triangle inequality to the second summand on the right-hand side and get 
	$d^p(x^{k}_R, f) - d^p(x^{k+1}_1, f) \leq d^p(x^{k}_R,x^{k+1}_1).$ Then, a simple manipulation shows that 
	\begin{align}
	d^p(x^{k+1}_1,x_{R}^k) \leq  \tfrac{C}{\mu_{k}-1}, 	
	\quad d^p(x^{k+1}_s,x_{s-1}^k) \leq \tfrac{C}{\mu_{k}-1}.
	\label{eq:No4}
	\end{align}
	As a consequence, there is a constant $D$ and an index $k_0$ such that, for all $k\geq k_0,$
	\[
	d(x^{k+1}_R,x_{R}^k) \leq D \mu_{k}^{-\tfrac{1}{p}}.	
	\]  
	Hence,
	\[
	d(x^{k+1}_R,x_{R}^{k_0}) \leq D \sum_{l=k_0+1}^{k+1} \mu_{l}^{-\tfrac{1}{p}} < \infty,	
	\]
	and so the sequence $x^{k+1}_R$ converges. By \eqref{eq:No3}, the  iterates $x_{s}^k$ converge to the same limit for all $s=1,\ldots,R-1.$
	This completes the proof.	
\end{proof}

{\small
\bibliographystyle{unsrt}
\bibliography{pottsManifoldOC}
}

\end{document}